\newcommand\N{{\mathbb N}}
\newcommand\Z{{\mathbb Z}}
\newcommand\R{{\mathbb R}}
\newcommand\Sph{{\mathbb S}}
\newtheorem{theorem}{Theorem}[section]
\newtheorem{corollary}[theorem]{Corollary}
\newtheorem{lemma}[theorem]{Lemma}
\newtheorem{proposition}[theorem]{Proposition}
\newtheorem{definition}[theorem]{Definition}
\newtheorem{example}[theorem]{Example}
\newtheorem{remark}[theorem]{Remark}
\newtheorem{question}[theorem]{Question}
\newtheorem*{remark*}{Remark}
\newtheorem*{question*}{Question}
\newtheorem*{problem*}{Problem}
\newtheorem*{example*}{Example}
\newcommand{\cal}{\mathcal}
\begin{document}

\title{Planar Equivalence of Knotoids and Quandle Invariants}

\author{MOHAMED ELHAMDADI} 
\address{Department of Mathematics and Statistics, 
	University of South Florida, Tampa, FL 33620, USA} 
\email{emohamed@usf.edu}

\author{WOUT MOLTMAKER} 
\address{Korteweg-de Vries Institute,
    University of Amsterdam, Amsterdam, 1098XG, Netherlands} 
\email{w.c.moltmaker@uva.nl} 

\author{MASAHICO SAITO} 
\address{Department of Mathematics and Statistics, 
	University of South Florida, Tampa, FL 33620, USA} 
\email{saito@usf.edu} 

\begin{abstract}
While knotoids on the sphere are well-understood by a variety of invariants, knotoids on the plane have proven more subtle to classify due to their multitude over knotoids on the sphere and a lack of invariants that detect a diagram's planar nature. In this paper, we investigate equivalence of planar knotoids using quandle colorings and cocycle invariants. These quandle invariants are able to detect planarity by considering quandle colorings that are restricted at distinguished points in the diagram, namely the endpoints and the point-at-infinity. After defining these invariants we consider their applications to  symmetry properties of planar knotoids such as invertibility and chirality. Furthermore we introduce an invariant called the triangular quandle cocycle invariant and show that it is a stronger invariant than the end specified quandle colorings.
\end{abstract}

\maketitle

\date{\empty}

%\tableofcontents

\section{Introduction}\label{sec:Intro}

Knotoids were introduced by V. Turaev \cite{Turaev} and are a generalization of knot diagrams, modelled on the unit interval rather than the circle.
This means that a knotoid diagram is allowed to have two end-points; what distinguishes knotoids from long knots is that the endpoints are allowed to lie anywhere in the diagram, including interior regions. What makes them nontrivial is that we impose an equivalence relation on knotoid diagrams that prevents us from moving any arcs over or under these endpoints, so that we may imagine them as fixed in place on the plane by two infinitely long rods or `rails' \cite{rail}.

Knotoids were originally introduced to study knots \cite{Turaev}, but since have also received considerable research interest in their own right. Several constructions of strong invariants for knots have been extended to knotoids \cite{newinvariants,moltmaker,winding}, and novel invariants of knotoids have been constructed \cite{barbensi,winding}. This work has culminated in a first classification of knotoids for low crossing number \cite{systematic}.

Since this initial investigation, knotoids have mainly found applications to protein topology, and more generally to studying the topology of physical systems of open-ended filamentous structures. We model these using `open curves', i.e.~smooth embeddings $[0,1]\hookrightarrow \mathbb{R}^3$. In particular, the following approach was taken in \cite{Eleni} to construct a Jones polynomial of open curves: given an open curve $C$ in $\mathbb{R}^3$, for a unit vector $v$ in $S^2$ define $C_v$ to be the knotoid diagram obtained from projecting $C$ onto the plane normal to $v$. Then for all but a measure zero subset of $S^2$, $C_v$ is a well-defined knotoid diagram. The idea is then that we can `measure' the entanglement of $C$ by considering the multiset $\{C_v\}_{v\in S^2}$. For any knotoid type $K\in \{C_v\}_{v\in S^2}$ we consider the area $A_K$ of the subset of $S^2$ that is sent to $K$ by the map $C\mapsto [C_v]$. The Jones polynomial of an open curve is then defined as the (a priori infinite) sum of Jones polynomials $V_K(t)$ of all knotoid types $K\in \{C_v\}_{v\in S^2}$, weighted by the areas $A_K$. This convergent sum can be written as an integral as follows, noting that the integrand is well-defined on all but a measure zero subset of $S^2$ \cite{Eleni}:

$$ % \[
    V_C(t) = \frac{1}{4\pi^2} \int_{v\in S^2} V_{C_v}(t) dA.
$$ % \]
This Jones polynomial of open curves is not an isotopy invariant of open curves, indeed all open curves are ambient isotopic to one another, but rather it is a `measure' for the open curve's knottedness.
Other measures of entanglement are obtained from following the same procedure using a knotoid invariant other than the Jones polynomial:  as long as the invariant takes values in some $\mathbb{Z}$-module $M$ we obtain a knot measure taking values in $M\otimes \mathbb{R}$. For example see \cite{Eleni2} where $\mathbb{R}$-valued entanglement measures are derived from $\mathbb{Z}$-valued knotoid invariants.

In all applications of this kind considered so far, the knotoid diagrams are thought of as lying on a sphere, rather than their plane projection. %plane. 
This can of course be done by adding a point at infinity to the plane. However, unlike for knots and links, for knotoids the equivalence classes on the plane differ from those on the sphere. Clearly adding a point at infinity to the plane is a surjection from knotoids on the plane to knotoids on the sphere, but this map is not injective. In fact it is many-to-one: for example, there are 8 equivalence classes of knotoids with crossing number 4 on the sphere, but 154 on the plane \cite{systematic}.

This observation implies that if one employs an entanglement measure that relies on an invariant of knotoids on the plane rather than on the sphere, then we can obtain a measure with a `higher resolution' for entanglement. This is in the sense that the underlying invariant can distinguish more different knotoids and hence different `kinds of knottedness'. The problem with this approach is that currently, not very many invariants of knotoids on the plane are known. Most straight-forward generalizations of knot and link invariants to knotoids turn out to factor through the one-point compactification of the plane, and hence yield invariants of knotoids on the sphere.

In this paper we define several new planar knotoid invariants based on quandles. Quandles were introduced to knot theory independently by D. Joyce \cite{Joyce} and S. Matveev \cite{Matveev}, and axiomatize the notion of `coloring' a knot diagram, where the quandle acts as a pallet of colors to use. The simplest quandle invariants %simply 
count the number of valid colorings of a diagram, yielding an $\mathbb{N}$-valued invariant. Extending these quandle-coloring counting invariants to planar knotoids, we obtain an $\mathbb{R}$-valued planar entanglement measure for open curves, which enjoys the increased resolution of employing a planar invariant as well as the convenience for applications of taking values in $\mathbb{R}$. Further extending these invariants to quandle homology invariants yields planar knotoid measures taking values in more complicated spaces.

Quandle invariants of knotoids were first studied in \cite{GN}, but the invariants described there do not tend to detect the planar character of knotoids on the plane, and are invariant under one of the `forbidden moves' for knotoids; namely that which pulls an arc over an endpoint. In fact, this invariance was already noted in a different context in \cite{CKS-Geom}. In this paper we give a refinement of these invariants that does detect planarity. We note that a recent article \cite{GR} constructs similar refinements using {\it fundamental} and {\it pointed} quandles; the work presented here is independent 
 and as such we do not refer to these constructions, although the colorings here can be interpreted as quandle homomorphisms from pointed fundamental quandles.

The organization of this article is as follows.  In Section \ref{sec:pre} we briefly give preliminary descriptions of knotoids and quandles. In Section \ref{sec:invariants} we construct quandle invariants of planar knotoids, and discuss how well these invariants can distinguish inequivalent planar knotoids that become equivalent on the sphere. In Section \ref{sec:invertibility} we consider our invariants in relation to planar invertibility. We refine the basic quandle invariants to quandle cocycle invariants in Section \ref{sec:cocycle}, and discuss these in relation to planar chirality in Section \ref{sec:chirality}. Finally in Section \ref{sec:triangular} we describe another quandle cocycle invariant for planar knotoids, called the `triangular' quandle 2-cocycle invariant.

\section{Preliminaries of knotoids and quandles}\label{sec:pre}

In this section we review materials used in this paper.

\subsection{Knotoids}

\begin{definition}
{\rm
Let $\Sigma$ be a surface. A \textit{knotoid diagram} on $\Sigma$ is a smooth immersion $K:[0,1]\to \Sigma$ whose only singularities are transversal self-intersections that are decorated with over/under-crossing data. The images of $0$ and $1$ are called the {\it initial point} (or \textit{tail}) and {\it terminal  point} (or \textit{head}) of the knotoid, respectively. Knotoid diagrams are oriented from tail to head. Two knotoids are said to be \textit{equivalent} if they can be related by a sequence of isotopies of $\Sigma$ and Reidemeister moves; see Figure \ref{Reid}. A \textit{knotoid} on $\Sigma$ is an equivalence class of knotoid diagrams on $\Sigma$.
}
\end{definition}

\begin{figure}[htb]
\begin{center}
\includegraphics[width=3.7in]{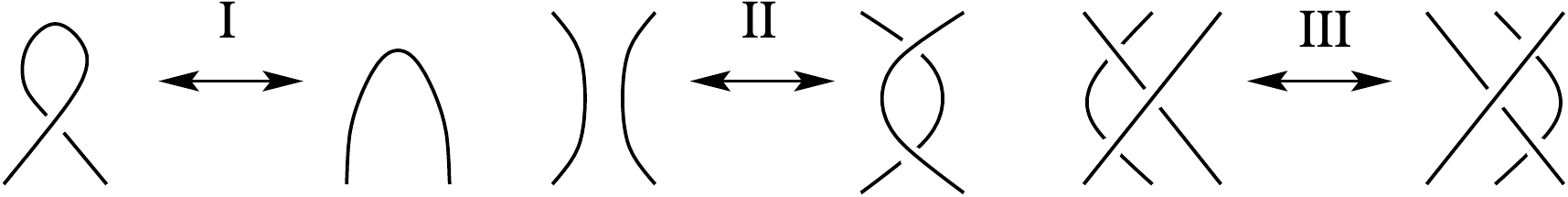}
\end{center}
\caption{}
\label{Reid}
\end{figure}

Note that the Reidemeister moves are not allowed to involve any endpoints of a knotoid, and therefore the \textit{forbidden moves} on a knotoid diagram, depicted in Figure \ref{forbidden} are explicitly not allowed. Note that if these forbidden moves were allowed, all knotoids on any surface $\Sigma$ would be equivalent to the trivial knotoid that has no crossings.

\begin{figure}[htb]
\begin{center}
\includegraphics[width=2.7in]{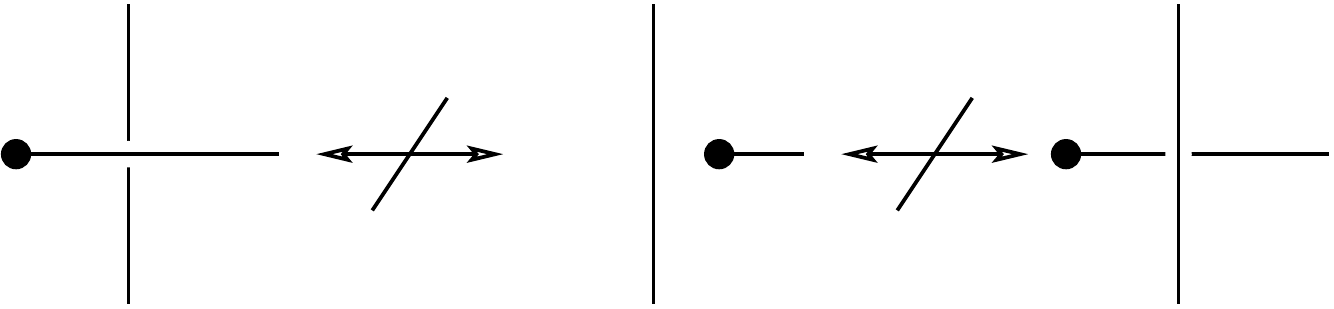}
\end{center}
\caption{}
\label{forbidden}
\end{figure}

In this work we restrict our attention to the cases $\Sigma = S^2$ and $\Sigma = \mathbb{R}^2$. Knotoids on $S^2$ are also called \textit{spherical knotoids}, and knotoids on $\mathbb{R}^2$ are called \textit{planar knotoids}.

Clearly there is a surjection from planar knotoids to spherical knotoids, given by the one-point compactification of the plane. This surjection acts on planar knotoid diagrams by adding a point at infinity; this operation preserves the equivalence of diagrams. For knots the same map exists, but is a bijection since an isotopy carrying an arc of a knot diagram across the added point at infinity can also be achieved by moving the same arc in the other direction, over the entire knot diagram. For knotoids this is not the case, as the endpoints get in the way of an arc moving over the entire diagram. As a result, for knotoids this surjection is not injective. In other words: there are many inequivalent planar knotoids that become equivalent when drawn on the sphere, the only difference between them being that they have different regions as their exterior region.

Conversely, given a spherical knotoid we can turn it into a planar knotoid diagram (non-canonically) by choosing a region in the diagram and removing a point in that region from $S^2$, i.e. designating it as the point at infinity. As such we now introduce the following notation:
Let $K$ be a spherical  knotoid diagram. 
Let $R$ be a region in the diagram. 
The planar knotoid of $K$ when $R$ is specified as the region at infinity 
is denoted by $(K, R)$. 
The spherical and planar equivalence of knotoids are denoted by $\cong_{\Sph^2}$ and $\cong_{\R^2}$, 
respectively.

Spherical knotoids have a natural product operation given by concatenation: for any spherical knotoid diagrams $K_1$ and $K_2$ we may assume without loss of generality that the head of $K_1$ and the tail of $K_2$ lie in the exteriors of their respective diagrams. The product $K_1\cdot K_2$ is then defined by identifying the head of $K_1$ with the tail of $K_2$ in the disjoint union $K_1 \sqcup K_2$. 

For planar knotoids we define the product operation analogously, as long as the exterior region of $K_2$ is appropriate:

\begin{definition}
Let $(K_1,R_1)$ and $(K_2,R_2)$ be planar knotoids such that the tail of $K_2$ lies in $R_2$. Then the \textit{product} $K_1\cdot K_2$ is the planar knotoid diagram obtained by placing a small copy of $K_2$ next to the head of $K_1$ and identifying the tail of $K_2$ with the head of $K_1$.
\end{definition}

\subsection{Quandle invariants}

%(1. Definition of quandles and examples.)
We start by reviewing the definition of a quandle and give a few examples.
\begin{definition} \cite{Joyce, Matveev}
{\rm
A quandle $(X,*)$ is a set with a binary operation $(x, y) \mapsto x * y$ satisfying the following three axioms:

(I) For all $x \in X$,
$x* x =x$.

(II) For all $y,z \in X$, there exists a unique $x \in X$ such that 
$x*y=z$.

(III) 
For all $x,y,z \in X$, we have
$ (x*y)*z=(x*z)*(y*z). $
}
\end{definition}
Recall that these three axioms correspond respectively to Reidemeister moves of type I, II, and III.

A function $\phi: (X,*) \rightarrow  (Y,\triangleright)$ between quandles  is called a {\it homomorphism}
if $\phi(x \ast y) = f(x) \triangleright f(y)$ 
for any $x,y \in X$.  The notions of isomorphisms and automorphisms are defined naturally. 
The {\it inner} automorphism group of a quandle $X$ is the subgroup of its automorphism group generated by 
the right action defined by the quandle operation.
A quandle is called {\it connected} if its inner automorphism group acts transitively.
Throughout this paper we assume that all quandles are connected, as it is sufficient to focus on them in terms of colorings of knotoid diagrams. 

The following are a few examples of quandles.

\begin{itemize}
\item
Any set $X$ with the operation given by $x*y=x$ for any $x,y \in X$ is
a quandle called the {\it trivial} quandle.
\item
The binary operation of conjugation $x*y=y^{-1}xy$ defines a quandle structure on any group $G$.  Furthermore, for the abelian cyclic group $G=\mathbb{Z}_n$, the binary operation can be written as $x*y=-x+2y$.  This quandle is called the {\it dihedral quandle} and denoted ${\bf R}_n$. 

\item
Let $\Lambda$ be $\mathbb{Z}[t^{\pm 1}]$ or $\mathbb{Z}_n[t^{\pm 1}]$.  Then any $\Lambda$-module $M$
is a quandle with
$x*y=tx+(1-t)y$, $x,y \in M$, called an {\it  Alexander  quandle}.
\end{itemize}

%(2. Definition of colorings.)

A  {\it coloring}   
of an oriented  knot or knotoid diagram is a
function ${\mathcal C} : R \rightarrow X$, where $X$ is a fixed 
quandle
and $R$ is the set of over-arcs in the diagram satisfying, at each crossing, the conditions (A) and (B) of Figure~\ref{qcolors}. 
The set of colorings of a knotoid diagram $K$ by a quandle $(X,*)$ is denoted by 
${\rm Col}_X(K)$.

Let $D \subset \Sigma$ be a diagram of a knotoid.  The connected components of $\Sigma \setminus D$ are called the regions.  A coloring of the regions is defined as a map from the set of regions of a colored knotoid diagram to a quandle that satisfy the conditions (C), (D) and (E) of Figure~\ref{qcolors}.

\begin{lemma}[\cite{CKS-Geom}] \label{lem:regioncolor}
Let $\cal C \in {\rm Col}_X(K)$ be a coloring of the set of arcs of a knot diagram $K$ by a connected quandle $(X,*)$.
For a given color $x$ of a particular region $R$, 
 there is a unique region coloring $\tilde{\cal C}$ that extends $\cal C$.
\end{lemma}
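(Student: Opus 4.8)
The plan is to build $\tilde{\cal C}$ by propagating the given color $x$ outward from $R$, and then to check that the propagation is consistent; uniqueness will come for free from connectedness of the surface. First I would pass to the combinatorial picture: view $K$ as a $4$-valent graph embedded in $\Sigma$ (with $\Sigma=S^2$ or $\R^2$), so that the regions are the complementary faces, and form the dual graph $G^{*}$ whose vertices are the regions and whose edges record adjacency across an arc of $K$. Conditions (C), (D), (E) of Figure~\ref{qcolors} say precisely that, for each such adjacency, the two region colors differ by the quandle action of the separating arc's color (acting by $*$ or by its inverse according to the orientation). Thus a region coloring is exactly an $X$-valued function on the vertices of $G^{*}$ whose increments along edges are prescribed by $\cal C$.

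For uniqueness, note that $\Sigma$ is connected, hence so is $G^{*}$: any region $R'$ is joined to $R$ by an edge-path, and propagating the prescribed increments along that path forces $\tilde{\cal C}(R')$ to be a definite word in the quandle operations applied to $x$. So at most one extension with $\tilde{\cal C}(R)=x$ can exist.

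For existence I must show this assignment is path-independent, i.e.\ that the prescribed increments are consistent around every cycle of $G^{*}$. I would reduce to a spanning set of cycles: since $K$ cellulates $S^2$ (after one-point compactification if $\Sigma=\R^2$), the Euler-characteristic relation identifies the cycle space of $G^{*}$ with the span of the small loops encircling the crossings of $K$ (the dual $2$-cells), modulo a single global relation. It therefore suffices to verify consistency around one crossing.

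This local check is the crux of the argument and the step I expect to be the main obstacle. At a crossing the over-arc is colored $y$ and, by the arc-coloring conditions (A), (B), the two under-arcs are colored $x_0$ and $x_0 * y$; calling one of the four incident regions $r$, the region rules determine the diagonally opposite region in two ways: one route passes the under-arc and then the over-arc, giving $(r * x_0)*y$, while the other passes the over-arc and then the under-arc, giving $(r * y)*(x_0 * y)$. These coincide by quandle axiom (III), the self-distributive identity. Hence the increments are consistent at every crossing, and therefore around every cycle of $G^{*}$, so the propagation defines a genuine region coloring $\tilde{\cal C}$ extending $\cal C$. The delicate point is organizing this verification so that the orientation-dependent signs in (C), (D), (E) match the crossing conventions of (A), (B); once they do, axiom (III) closes the argument. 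I note that connectedness of $X$, a standing assumption in the paper, is not actually needed for this particular extension statement.
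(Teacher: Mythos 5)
Your proposal is correct and follows essentially the same route as the paper: propagate the region color outward from $R$ and reduce consistency to the local check at a single crossing, where the two routes give $(r*x_0)*y$ and $(r*y)*(x_0*y)$, equal by quandle axiom (III) --- this is exactly the verification the paper invokes via homotopy-independence of arcs and condition (E) of Figure~\ref{qcolors}. Your dual-graph/cycle-space phrasing is just a combinatorial restatement of that homotopy argument, and your side remark that connectedness of $X$ is not needed here is accurate.
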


The proof of this lemma is to define region colors along arcs that go under the knot diagram and determine colors
according to the rule. The uniqueness follows from independence under homotopy of such arcs, as shown in Figure~\ref{qcolors} (E), where the rightmost region color is independent of choice of arcs that goes over and under the center crossing.
This lemma can be extended to knotoids in the following manner.

\begin{lemma}\label{lem:color}
Let $\cal C \in {\rm Col}_X(K)$ be a coloring of the set of arcs of a spherical knotoid 
diagram $K$ by a connected quandle $(X,*)$.
Then
 there is a unique region coloring $\tilde{\cal C}$ that extends $\cal C$.
\end{lemma}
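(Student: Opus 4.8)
The plan is to reduce the statement to Lemma~\ref{lem:regioncolor}. For a knot diagram a region coloring is determined only after one freely prescribes the color of a single region; a spherical knotoid diagram differs from a knot diagram solely in carrying two endpoints, so the task is to show that these endpoints absorb exactly this one degree of freedom and thereby force a unique extension $\tilde{\mathcal C}$ of $\mathcal C$.

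First I would examine the region coloring in a small disk around the tail. The terminal arc, colored by some $a\in X$, enters the disk and stops at the endpoint, so its two sides belong to a single region $R_T$: one may pass from one side to the other either by crossing the arc, which transforms the region color by $\ast a$, or by travelling around the free end, which crosses nothing and so preserves the region color. Comparing the two passages forces the color $c$ of $R_T$ to satisfy $c\ast a=c$, and axiom (I) gives $a\ast a=a$, so $c=a$ is the value that makes the region coloring compatible with $\mathcal C$ at the endpoint. Taking $R_T$ to be colored by $a$ (and likewise the head region by the head arc color), Lemma~\ref{lem:regioncolor} then supplies a region coloring $\tilde{\mathcal C}$ extending $\mathcal C$ over every region of $S^2\setminus K$; uniqueness is immediate, since any admissible extension must assign $R_T$ the color $a$ and is then pinned down everywhere by the lemma.

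The step I expect to be the main obstacle is verifying that this $\tilde{\mathcal C}$ is genuinely consistent at the \emph{second} endpoint, i.e.\ that the color the propagation assigns to the region around the head indeed satisfies the head's endpoint relation (equivalently, equals the head arc color), so that the two endpoint prescriptions are not in conflict. Here I would use the topology of $S^2$: the diagram complement carries the tail and head as its only distinguished points, and a loop encircling the head is homotopic in $S^2\setminus K$ to the reverse of a loop encircling the tail, so the region-coloring rule forces the head condition to follow from the tail condition $a\ast a=a$. Making this explicit by tracking colors along a path from tail to head and invoking only the validity of $\mathcal C$ at the crossings it meets (conditions (C)--(E)) establishes the compatibility and completes the proof.
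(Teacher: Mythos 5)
Your overall architecture---pin down the color of the tail region, propagate with Lemma~\ref{lem:regioncolor}, then verify consistency at the head---is parallel in spirit to the paper's proof, which propagates colors along an under-arc $\gamma$ from $p_\iota$ to $p_\tau$ (the under closure $\hat K = K\cup\gamma$). However, two of your steps do not hold up. First, the deduction ``$c*a=c$ and $a*a=a$, hence $c=a$'' is false in a general connected quandle: the equation $c*a=c$ only says that $c$ is a fixed point of the right translation by $a$, and right translations can have many fixed points. For a conjugation quandle $x*y=y^{-1}xy$ one has $c*a=c$ whenever $a$ and $c$ commute; taking the conjugacy class of $4$-cycles in the symmetric group on four letters (a connected quandle) and $c=a^{-1}\neq a$ gives a concrete counterexample. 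So walking around the free end only constrains ${\cal C}(R_T)$ to lie in the fixed-point set of $*a$; it does not force ${\cal C}(R_T)=a$. That the region containing an endpoint carries the color of the arc ending there is one of the defining conditions of a region coloring of a knotoid diagram (Figure~\ref{qcolors}), and the uniqueness assertion of the lemma rests on that convention, not on axiom (I).

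Second, and more seriously, the consistency check at the head is where the real content of the lemma lives, and the homotopy argument you propose does not deliver it. Homotoping a small loop around $p_\tau$ to the reverse of a small loop around $p_\iota$ (this must take place in $S^2$ minus the two endpoints, not in $S^2\setminus K$, since such loops meet $K$) shows only that the two holonomy conditions $c_\tau*{\cal C}(p_\tau)=c_\tau$ and $c_\iota*{\cal C}(p_\iota)=c_\iota$ are equivalent to one another; it does not show that the color $c_\tau$ obtained by propagating ${\cal C}(p_\iota)$ across the diagram equals ${\cal C}(p_\tau)$, and by the first point ``fixed by $*{\cal C}(p_\tau)$'' is strictly weaker than ``equal to ${\cal C}(p_\tau)$''. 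What must be proved is precisely that transporting ${\cal C}(p_\iota)$ under every arc of $K$ along a path from $p_\iota$ to $p_\tau$ lands on ${\cal C}(p_\tau)$, i.e.\ that the end colors of a quandle-colored (1-1)-tangle coincide. This is a genuine algebraic fact, not a consequence of the topology of $S^2$, and the paper disposes of it by citation \cite{CDS,Jozef}. Your closing sentence---``tracking colors along a path and invoking the validity of ${\cal C}$ at the crossings it meets establishes the compatibility''---restates the claim rather than proving it; you need either to supply that argument or to cite it.
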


\begin{proof}
Let $p_\iota$ and $p_\tau$  be the initial and terminal  points of $K$.
Let $\gamma$ be an arc from  $p_\iota$ to  $p_\tau$ that goes under the arcs of $K$.
Let $\hat K$ be the knot diagram given by the union $K \cup \gamma$, called an {\it under closure} of $K$.
Extend the coloring ${\cal C}$ of $K$ along $\gamma$ as it goes under each arc of $K$, following the coloring
rule. This argument is also used in \cite{Turaev}.
When it reaches near $p_\tau$, the colors of these points coincide.
This is seen by an argument showing that the end colors of (1-1)-tangles coincide, see \cite{CDS,Jozef}.
Independence of the choice of $\gamma$ can be seen by the homotopy argument as in Lemma~\ref{lem:regioncolor}.
\end{proof}

%(3. Definitions of cocycle invariants.)

\begin{figure}[htb]
\begin{center}
\includegraphics[width=5.5in]{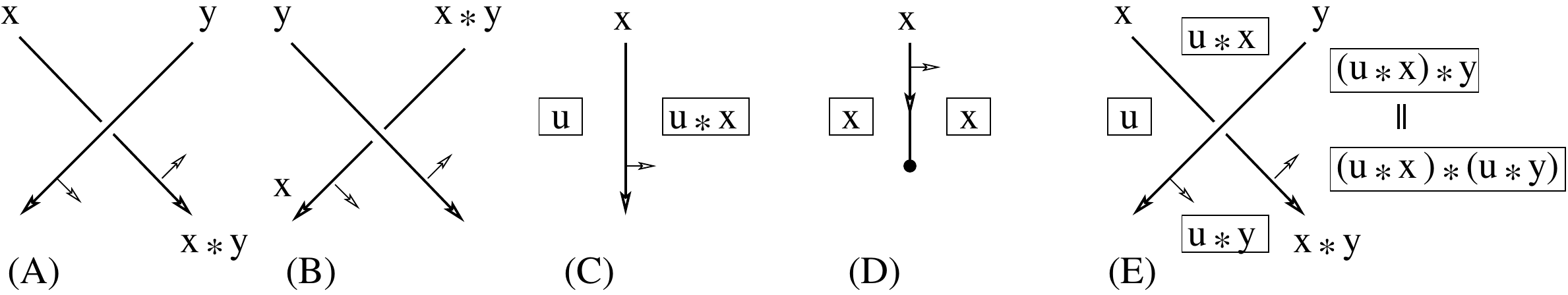}
\end{center}
\caption{}
\label{qcolors}
\end{figure}

\section{End specified triangular colorings}\label{sec:invariants}
              
Let $K$ be a spherical oriented knotoid diagram.
Let $R_\infty$ be the region specified as the region at infinity.
Let $R_\iota$ and $R_\tau$ be the regions at the initial and terminal end points, respectively.
Let  $p_\infty$ be a point in $R_\infty$, and  $p_\iota$, $p_\tau$ 
be the initial and terminal end points,  respectively.

Let $(X,*)$ be a quandle.
Denote by $\Z[X]$ the free integral module over $X$, consisting of elements of the formal
finite sum $\sum  a_x x$, where $a_x\in \Z$ and $x \in X$.
The addition is defined monomial wise and extended linearly, $( \sum a_x x ) + ( \sum b_y y)=\sum (a_z + b_z ) z $.
Since $\Z [X \times X]\cong \Z [X] \otimes \Z[X]$, we denote their elements by $ \sum a_{(x,y)}  x \otimes y $,
and similar for multiple tensors.

\begin{definition}
{\rm
Let $(X,*)$ be a quandle and let $(K, R_\infty)$ be a planar oriented knotoid.
An ordered  {\it quandle triple} $(a,b,c) \in X^3$ is an ordered triple such that 
$a,b,c$ belong to the same connected component of $X$.

An {\it end specified coloring} ${\cal C}: {\cal A}\cup {\cal R} \rightarrow X$
with respect to a quandle triple $(a,b,c)$  is a coloring
of arcs and regions of $K$ by $X$ such that 
$({\cal C} (p_\infty) , {\cal C} (p_\iota) , {\cal C} (p_\tau)  )=(a,b,c) $.
 The set of all end specified colorings with respect to a quandle triple $(a,b,c)$ is denoted by 
${\rm Col}^{(a,b,c)}_X (K, R_\infty)$, and its cardinality is denoted by 
%$c^{(a,b,c)} = c^{(a,b,c)}_X (K,R) =
$ | {\rm Col}^{(a,b,c)}_X (K , R_\infty ) | $.
The  {\it end specified coloring invariant} of $(K,R)$ with respect to $X$ and $(a,b,c)$ 
is a formal sum %$\Xi (X,K) 
$$ {\cal Col}_X (K, R_\infty)  := \sum_{ (a,b,c) \in X^3 } | {\rm Col }_X^{(a,b,c)} (K, R_\infty) | (a \otimes b \otimes c )  \in \Z[X]^{\otimes 3}. $$

Similarly, for a spherical knotoid $K$, an end specified coloring ${\cal C}$ is defined by
a coloring such that $({\cal C}(p_\iota), {\cal C}(p_\tau))=(b,c)$ for $(b,c) \in X^2$.
The set of all end specified coloring with respect to a quandle pair $(b,c)\in X^2$ is denoted by 
${\rm Col}^{(b,c)}_X (K)$.
Similar colorings are defined for planar knot diagrams when no color is specified for the region at infinity.

}
\end{definition}

\begin{lemma}
Let $(K,R)$ be a planar knotoid and $X$ be a quandle.
Then $\sum_{a \in X} | {\rm Col}^{(a,b,c)}(K, R) | = |   {\rm Col}^{(b,c)}(K) |$.
\end{lemma}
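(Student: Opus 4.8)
The plan is to set up a bijection between the set of planar end specified colorings of $(K,R)$ that have prescribed endpoint colors $(b,c)$ and \emph{arbitrary} color at infinity, and the set ${\rm Col}^{(b,c)}(K)$ of spherical arc colorings. First I would observe that the regions of the underlying diagram are the same whether $K$ is viewed on $S^2$ or on $\R^2 = S^2 \setminus \{p_\infty\}$ with $p_\infty \in R = R_\infty$; consequently a planar end specified coloring of $(K,R)$ is precisely the data of an arc coloring ${\cal C} \in {\rm Col}_X(K)$ together with a region coloring $\tilde{\cal C}$ extending it, constrained only by ${\cal C}(p_\iota) = b$, ${\cal C}(p_\tau) = c$, and $\tilde{\cal C}(R_\infty) = a$.

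Next I would invoke Lemma~\ref{lem:color}: every arc coloring ${\cal C}$ of the spherical knotoid $K$ extends to a \emph{unique} region coloring $\tilde{\cal C}$. In particular the color $\tilde{\cal C}(R_\infty)$ assigned to the region at infinity is a well-defined function of ${\cal C}$ alone; write it $a({\cal C})$. This is the crucial input, and it is exactly what distinguishes knotoids from knots: for a knot the region coloring carries one free parameter (Lemma~\ref{lem:regioncolor}), so the color at infinity would not be determined, whereas the endpoints of a knotoid rigidify the extension. The map sending a planar coloring to its underlying arc coloring is therefore well-defined, and it is a bijection onto ${\rm Col}^{(b,c)}(K)$: it is surjective because each arc coloring with ${\cal C}(p_\iota)=b$ and ${\cal C}(p_\tau)=c$ extends to a planar coloring (with color $a({\cal C})$ at infinity), and injective because two planar colorings sharing the same arc coloring must share the unique region extension, hence coincide.

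Finally I would organize the count by the color at infinity. Since colorings with different values of $a$ are disjoint, the disjoint union $\bigsqcup_{a \in X} {\rm Col}^{(a,b,c)}(K,R)$ is exactly the set of all planar end specified colorings with endpoint colors $(b,c)$, which the bijection above identifies with ${\rm Col}^{(b,c)}(K)$. Taking cardinalities yields
$$ \sum_{a \in X} | {\rm Col}^{(a,b,c)}(K,R) | = \Big| \bigsqcup_{a\in X} {\rm Col}^{(a,b,c)}(K,R) \Big| = | {\rm Col}^{(b,c)}(K) |, $$
as claimed. The only genuine subtlety, and the step I would be most careful about, is the appeal to uniqueness in Lemma~\ref{lem:color}: one must check that the region coloring of the planar diagram really is forced by the arc coloring (so that $a$ is a function of ${\cal C}$ and not a free choice), and that the region-at-infinity color $\tilde{\cal C}(R_\infty)$ is computed from the same region-coloring rules on $\R^2$ as on $S^2$.
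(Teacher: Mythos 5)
Your argument is correct and follows essentially the same route as the paper: both proofs rest on Lemma~\ref{lem:color}, which guarantees that each arc coloring with endpoint colors $(b,c)$ extends to a unique region coloring, so that the color $a$ at infinity is determined by ${\cal C}$ and the sets ${\rm Col}^{(a,b,c)}(K,R)$ partition ${\rm Col}^{(b,c)}(K)$. The paper states this in three lines; your version merely spells out the resulting bijection and the counting step more explicitly.
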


\begin{proof}
For a given $(b,c)\in X^2$, let ${\cal C} \in {\rm Col}^{(b,c)}(K)$. 
Then by Lemma~\ref{lem:color}, there is a unique $a \in X$ such that ${\cal C} \in {\rm Col}^{(a,b,c)}(K, R)$.
Hence the statement follows.
\end{proof}

\begin{figure}[htb]
\begin{center}
\includegraphics[width=2.5in]{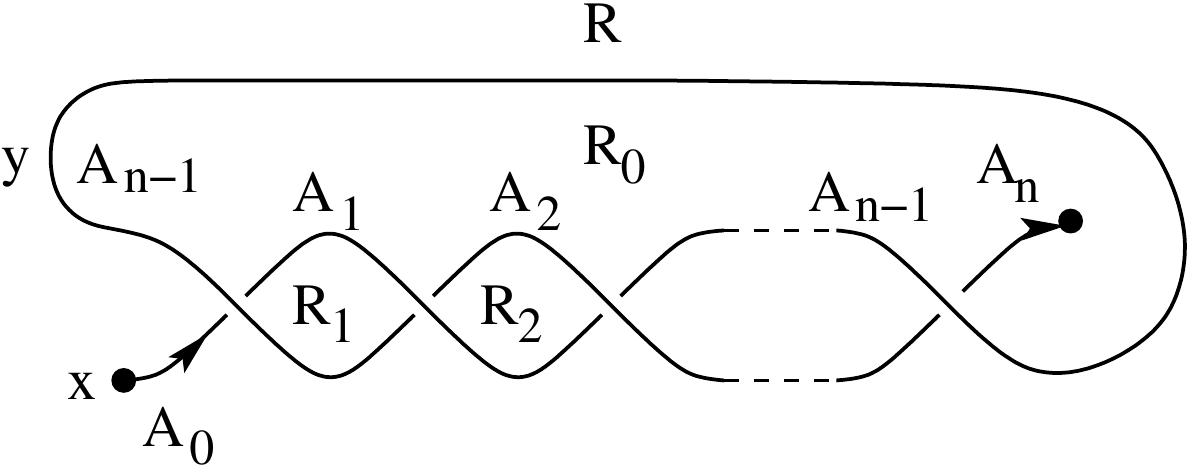}
\end{center}
\caption{}
\label{T2n}
\end{figure}

\begin{example}\label{ex:Tn}
{\rm
In Figure~\ref{T2n}, a knotoid $T_n$ is depicted, where  $n=2 h$ is an even integer.
Arcs $A_i$ and regions $R_j$ ($i =0, 1, \ldots, n$, $j =0, 1, \ldots, n-1$) are indicated, as well as the region at infinity $R$.
In this example we show that 
$(T_n, R_1) \not \equiv_{\R^2} (T_n, R_2)$
using colorings.

We examine possible colorings. 
Let ${\cal C}(p_\iota)=x$ and ${\cal C}(A_{n-1})=y$.
Recall that an Alexander quandle $X$ has the operation  $u*v=tu+(1-t)v$
for $u,v \in X$.
If $T_n$ is colored by an Alexander quandle $X$ such that  ${\cal C}(p_\iota)=x$ and ${\cal C}(A_{n-1})=y$,
then one inductively computes that 
$${\cal C} (A_k)=t \xi_{k-1}(t) x + \xi_k (t) y, \ {\rm  where} \
\xi_k (t) =1 - t + t^2 - \cdots + (-t)^k ; \ (\xi_0(t)=1, \ \xi_1(t)=1-t). $$
Similarly, we have 
\begin{eqnarray*}
{\cal C} (R_1) &=&  tx+(1-t)y, \\
{\cal C} (R_2) &=&   (2-t)tx + (1-t)^2 y,\\
{\cal C} (R_k) & = & [ 1 + (1-t)\xi_{k-2} (t) ] tx + (1-t) \xi_{k-1} (t)    y \ {\rm for } \ k>2.
\end{eqnarray*}

For a dihedral quandle ${\bf R}_\ell = \Z_\ell [t] /(t+1)$, we have 
$\xi_k(-1)=k+1$, ${\cal C} (A_k)= - k x + (k+1) y$ and ${\cal C} (R_k)= (1-2k) x + (2k)y $ for $k>0$
and ${\cal C}(R_0)=-x+2y$. 
A  pair $(x,y)$  gives rise to a  coloring if and only if ${\cal C} (A_{n-1})=y$ and 
${\cal C} (R_0)= {\cal C} (A_n)$, that is, 
$$-(n-1) x + n y = y \quad {\rm and } \quad -x+2y = -n x + (n+1) y , $$
 the same condition. 
This condition is satisfied if and only if $n=1$ mod $\ell$.
Thus we consider the case $n=\ell +1$, and take $X={\bf R}_{n -1}= {\bf R}_{2h  -1}$.
Then there is a coloring of $K$ for any $x, y \in X$. 
In this case we also have ${\cal C} (R_0)= {\cal C} (A_n)=2y-x$. 
Since $2$ is invertible, the correspondence $(x,y) \mapsto (x, 2y-x)$ is bijective.
Since ${\cal C}(R_1)=2y-1$ as well, for
$x=0$ and $y=1$, we have
$|{\rm Col}^{(2,0,2)}(T_n, R_1)|=1$.
Since ${\cal C}(R_2)=4y-3x=4$ for $x=0$ and $y=1$,
we have $|{\rm Col}^{(2,0,2)}(T_n, R_2)|=0$.
Hence we obtain that $(T_n, R_1)$ and $(T_n, R_2)$ are not planarily equivalent.
}
\end{example}

\begin{proposition}\label{prop:infinite}
For any positive integer $m$, there exists a spherical knotoid $K$ that has $m$ regions $R_i$, $i=1, \ldots, m$, 
such that the planar knotoids $(K, R_i)$ 
are pairwise inequivalent as planar knotoids.
\end{proposition}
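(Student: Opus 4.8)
The plan is to generalize Example~\ref{ex:Tn} directly. There the twist knotoid $T_n$ already produces two inequivalent planar realizations $(T_n,R_1)$ and $(T_n,R_2)$, and I expect that for $n$ large the whole family $(T_n,R_1),\dots,(T_n,R_{n-1})$ is pairwise inequivalent. So given $m$ I would choose an even integer $n$ with $n-1\ge m$, set $K=T_n$, and take the $m$ bigon regions $R_1,\dots,R_m$ of the twist region of $T_n$ as the candidate regions $R_i$. The conceptual content is simply that the long twist region of $T_n$ allows the color of the region at infinity to take $n-1$ distinct values, and the end specified coloring invariant records that value.

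To separate the regions I would use the end specified coloring invariant with the dihedral quandle $X={\bf R}_{n-1}$, exactly as in Example~\ref{ex:Tn}. Recall from that computation that every pair $(x,y)\in X^2$ extends to a unique coloring ${\cal C}$ of $T_n$ (this is where the congruence $n\equiv 1 \bmod (n-1)$ is used), with ${\cal C}(p_\iota)=x$, ${\cal C}(p_\tau)={\cal C}(A_n)=2y-x$, and region colors ${\cal C}(R_k)=(1-2k)x+2ky$ for $k>0$. The key point is that, since $n$ is even, the modulus $n-1$ is odd, so $2$ is a unit in $X$; hence the specialized region colors $2k$ (taking $x=0$) are pairwise distinct for $k=1,\dots,n-1$, running over all residues mod $n-1$.

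The decisive step is then to extract a single distinguishing coefficient of ${\cal Col}_X(T_n,R_i)\in\Z[X]^{\otimes 3}$. Specializing to $x=0,\ y=1$ gives the quandle triple $(a,b,c)=(2i,0,2)$ when $R_i$ is the region at infinity. Now fix the endpoint data $(b,c)=(0,2)$: invertibility of $2$ forces $x=0$ and $y=1$, so there is exactly one coloring with this endpoint data. Consequently the coefficient of $2i\otimes 0\otimes 2$ in ${\cal Col}_X(T_n,R_i)$ equals $1$, while the coefficient of $a\otimes 0\otimes 2$ vanishes for every $a\ne 2i$. Comparing with $R_j$, the coefficient of $2i\otimes 0\otimes 2$ in ${\cal Col}_X(T_n,R_j)$ is therefore $0$ as soon as $2j\ne 2i$, i.e. whenever $i\ne j$ (again because $2$ is a unit and $1\le i,j\le n-1$). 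Since the end specified coloring invariant is an invariant of planar knotoids, this yields $(T_n,R_i)\not\cong_{\R^2}(T_n,R_j)$ for all $i\ne j$, proving the proposition.

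I do not anticipate a serious obstacle beyond bookkeeping once Example~\ref{ex:Tn} is in hand; the only thing to watch is the arithmetic in $\Z_{n-1}$, specifically ensuring that the $m$ chosen regions carry genuinely distinct region colors. This is exactly what the choice of $n$ even (making $2$ invertible) guarantees, and it is why the $2k$ remain distinct across the full range $k=1,\dots,n-1$ rather than collapsing.
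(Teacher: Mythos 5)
Your proposal is correct and follows essentially the same route as the paper: take $K=T_n$ with $n$ large enough, color by the dihedral quandle ${\bf R}_{n-1}$, and use the quandle triples $(2k,0,2)$ from Example~\ref{ex:Tn} to show $|{\rm Col}^{(2k,0,2)}_X(T_n,R_k)|=1$ while the corresponding count vanishes for $R_{k'}$ with $k'\neq k$. The only differences are cosmetic bookkeeping (your bound $n-1\ge m$ versus the paper's $n\ge m+2$, and your explicit justification that the residues $2k$ stay distinct because $2$ is a unit mod the odd number $n-1$).
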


\begin{proof}
We set $K$ to be $T_n$ in Example~\ref{ex:Tn}, where $n \geq m+2$.
We  use the notations as in Figure~\ref{T2n}, and computations in Example~\ref{ex:Tn}
with the dihedral quandle ${\bf R}_\ell$, %as in the example
 where $n=\ell + 1$,
 $\ell = n-1=2h -1$.

For a fixed $k$, $1 \leq k \leq m$, 
we consider the quandle triple  $(2k, 0, 2)$, where $2k$ 
is regarded as an element of $\Z_n$. 
Then as in the example, we have 
$$| {\rm Col}^{(2k,0,2)}_X(K, R_k) | = 1, \ {\rm and}\ 
| {\rm Col}^{(2k,0,2)}_X(K, R_{k'}) | = 0$$
 for all $k' \neq k$, $k'=1, \ldots, m$. 
By varying $k=1, \ldots , m$, we obtain that 
$(K, R_i)$ are pairwise inequivalent.
\end{proof}

\begin{example}\label{ex:Xi}
{\rm
To compute the number of colorings over all possible quandle triples for $(T_n, R_k)$,
we use two colors $x, y$ in Figure~\ref{T2n}  as parameters. 
To have nontrivial colorings, we set $X={\bf R}_\ell$ with $\ell = n-1$ as computed in Example~\ref{ex:Tn}.
We have computed that for a given $(x,y)$, we have 
$${\cal C} (p_\iota )=x, \ {\cal C} (p_\tau) =  {\cal C} (A_n)= {\cal C} (R_0) = 2 y - x , \  {\cal C} (p_\infty)= {\cal C} (R_k) =  (1-2k)x + (2k)y. $$
For a given $(x,y,k)$, we have 
$$| {\rm Col}^{((1-2k)x + (2k)y, x, 2y-x)}_X(K, R_k) | = 1, $$
and for other $(a,b,c)$ we have $| {\rm Col}^{(a,b,c)}_X(K, R_k) | = 0$.
Therefore we obtain 
$${\cal Col}_X (K,R_k) = \sum_{ (a,b,c) \in Qt(X) } | {\rm Col}^{(a,b,c)}_X(K, R_k) |  (a \otimes b \otimes c) =
\sum_{ x, y \in X } ( [(1-2k)x + (2k)y] \otimes  x \otimes [ 2y-x] ).$$
}
\end{example}

\begin{definition}
{\rm
The {\it matching product} on $\Z  [X]^{\otimes 3} $, 
$$\vartriangle : ( \Z  [X]^{ \otimes 3 } ) \times ( \Z  [X]^{ \otimes 3 } ) \rightarrow \Z  [X]^{ \otimes 3 }  , $$
is defined on monomials by 
$$  a_{x,y,z}  (x  \otimes y \otimes z )  \vartriangle b_{u,v,w}  (u  \otimes  v \otimes w ) = \begin{cases}
(a_{x,y,z}  b_{u,v,w}  ) ( x \otimes y \otimes w ) & {\text {if }}  z=u=v \\
0 & {\text{otherwise} }
\end{cases}
$$
and extended linearly.
}
\end{definition}

\begin{lemma}\label{lem:prod}
Let $(X, *)$ be a quandle.
Let $(K_1,R_1)$, $(K_2,R_2)$ be planar knotoids,
and $p_\iota^i$ and  $p_\tau^i$ be the initial and terminal points of $K_i$ for $i=1,2$.
Suppose  $K_2$  satisfies $p_\iota^2 \in R_2$ so that the product $K_1\cdot K_2$ is defined.
Then we have 
$$  {\cal Col}_X (K_1 \cdot K_2 ,R_1) = {\cal Col}_X (K_1 ,R_1 )  \vartriangle  {\cal Col}_X (K_2 ,R_2 )  . $$

\end{lemma}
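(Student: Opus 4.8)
The plan is to build an explicit bijection between colorings of the product diagram and matching pairs of colorings of the two factors, and then read off the induced count as the matching product. First I would fix the geometric dictionary between $K_1\cdot K_2$ and its factors. By construction the tail of $K_1\cdot K_2$ is the tail $p_\iota^1$ of $K_1$, its head is the head $p_\tau^2$ of $K_2$, and its region at infinity is $R_1$. The gluing identifies the head $p_\tau^1$ of $K_1$ with the tail $p_\iota^2$ of $K_2$, while the small copy of $K_2$ is placed inside the region $\rho$ of $K_1$ that contains $p_\tau^1$; in particular the region at infinity $R_2$ of $K_2$ becomes the part of $\rho$ lying outside the small copy. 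Following the distinguished-point conventions, a coloring ${\cal C}$ of $K_1\cdot K_2$ then restricts to a coloring ${\cal C}_1$ of $K_1$ with $({\cal C}_1(p_\infty),{\cal C}_1(p_\iota^1),{\cal C}_1(p_\tau^1))=(a,b,z)$ and a coloring ${\cal C}_2$ of $K_2$ with $({\cal C}_2(p_\infty),{\cal C}_2(p_\iota^2),{\cal C}_2(p_\tau^2))=(u,v,c)$, where $z$ is the color of the arc through the gluing point.

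Next I would pin down the relations among $z,u,v$. Arc-continuity at the gluing point (the head arc of $K_1$ and the tail arc of $K_2$ become a single arc of the product) gives $z=v$. Since the small copy of $K_2$ sits inside $\rho$ with its region at infinity $R_2$ forming part of $\rho$, region-matching gives $u={\cal C}_2(R_2)={\cal C}_1(\rho)$; and the endpoint--region relation applied to the head $p_\tau^1$ of $K_1$---namely that the region containing an endpoint carries the arc color at that endpoint, as witnessed by the identity ${\cal C}(p_\tau)={\cal C}(R_0)$ in Example~\ref{ex:Xi}---gives ${\cal C}_1(\rho)=z$. Hence $z=u=v$, which is exactly the matching condition appearing in the definition of $\vartriangle$. (This is consistent with the endpoint--region relation applied to the tail $p_\iota^2\in R_2$ of $K_2$, which independently forces $u=v$.)

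For the converse I would show that any pair $({\cal C}_1,{\cal C}_2)$ with ${\cal C}_1\in{\rm Col}_X^{(a,b,z)}(K_1,R_1)$ and ${\cal C}_2\in{\rm Col}_X^{(z,z,c)}(K_2,R_2)$ glues to a unique coloring of $K_1\cdot K_2$ with triple $(a,b,c)$. On arcs this is immediate: since $K_2$ is a small copy attached only at the gluing point, no new crossings are created, and $z=v$ makes the two arc-colorings agree on the shared arc, producing a valid arc coloring of the product. On regions I would invoke Lemma~\ref{lem:color}: an arc coloring together with a single region color determines the entire region coloring uniquely, so fixing $R_1\mapsto a$ extends the glued arc coloring to a region coloring of the product, and the matching $u=z={\cal C}_1(\rho)$ guarantees that this extension agrees with both ${\cal C}_1$ and ${\cal C}_2$ on their respective regions (in particular on $R_2\subseteq\rho$). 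Thus restriction and gluing are mutually inverse.

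Finally I would translate the bijection into the coefficient identity. It yields
\[
|{\rm Col}_X^{(a,b,c)}(K_1\cdot K_2,R_1)|=\sum_{z\in X}|{\rm Col}_X^{(a,b,z)}(K_1,R_1)|\,|{\rm Col}_X^{(z,z,c)}(K_2,R_2)|,
\]
and the right-hand side is precisely the coefficient of $a\otimes b\otimes c$ obtained by pairing the monomials $a\otimes b\otimes z$ of ${\cal Col}_X(K_1,R_1)$ with the monomials $z\otimes z\otimes c$ of ${\cal Col}_X(K_2,R_2)$ under $\vartriangle$, whose nonzero output requires $z=u=v$ and equals $a\otimes b\otimes c$. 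Summing over all triples $(a,b,c)\in X^3$ gives the claimed identity. The main obstacle is the region-coloring bookkeeping at the gluing site: one must verify that the region at infinity of $K_2$ inherits exactly the color $z$ of the head arc of $K_1$, which is where both the endpoint--region relation and the hypothesis $p_\iota^2\in R_2$---ensuring that it is the region at infinity of $K_2$ that is glued into $\rho$---are essential.
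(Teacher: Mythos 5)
Your proof is correct and follows essentially the same route as the paper's: a monomial-wise bijection between colorings of $K_1\cdot K_2$ and pairs of colorings of the factors that agree at the identified endpoints, with the matching condition $z=u=v$ translating directly into the product $\vartriangle$. You are in fact more explicit than the paper about why the color of the region at infinity of $K_2$ is forced to equal the gluing color (via the endpoint--region relation and the hypothesis $p_\iota^2\in R_2$), a point the paper leaves implicit in its choice of ${\rm Col}^{(c,c,d)}_X(K_2)$.
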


\begin{proof}
This is checked monomial wise.
Then  the initial and terminal points $p_\iota$ and  $p_\tau$ of $K_1 \cdot K_2$ are %is 
$p_\iota=p_\iota^1$ and $p_\tau=p_\tau^2$, respectively.
The other points $p_\tau^1$ and $p_\iota^2$ are identified in $K_1 \cdot K_2$.
For $a,b,c,d \in X$,  ${\cal C}_1 \in  {\rm Col}^{(a,b,c)}_X(K_1)$ and  ${\cal C}_2 \in  {\rm Col}^{(c,c,d)}_X(K_2)$, 
a coloring ${\cal C}={\cal C}_1 \cdot {\cal C}_2  \in  {\rm Col}^{(a,b,d)}_X(K_1\cdot K_2)$,
that restricts to ${\cal C}_1$ and $ {\cal C}_2$ on each factor,  is well defined after
identifying $p_\tau^1$ and $p_\iota^2$, since ${\cal C}_1 (  p_\tau^1 ) =     {\cal C}_2 (p_\iota^2) $. 
Hence we obtain 
$$ | {\rm Col}^{(a,a,d)}_X(K_1\cdot K_2) |= | {\rm Col}^{(a,b,c)}_X(K_1) | |  {\rm Col}^{(c,c,d)}_X(K_2)|. $$ 
When ${\cal C}_1 (  p_\tau^1 ) \neq     {\cal C}_2 (p_\iota^2)$, the colorings do not extend.
Thus  there is a bijection between the set of colorings of $K_1$ and $K_2$ such that 
${\cal C}_1 (  p_\tau^1 ) =  {\cal C}_2 (p_\iota^2)$ and the set of colorings of $K_1\cdot K_2$,
and the equality holds for monomials.
\end{proof}

\begin{example}\label{ex:Tnm}
{\rm
Let $T_n$ be the knotoid in  Example~\ref{ex:Tn}, where it was shown that 
${\cal Col}_X(T_n,R) = \sum_{a, b \in {\bf R}_{2h-1}} a \otimes  a \otimes b$ for a positive integer $h$, $n=2h$. 
Lemma~\ref{lem:prod} implies that 
$${\cal Col}_X(T_n \cdot T_n,R) ={\cal Col}_X(T_n,R) \vartriangle {\cal Col}_X(T_n,R) =
(\sum a \otimes a \otimes b ) \vartriangle (\sum c \otimes c \otimes d ) . $$
See Figure~\ref{two} for a diagram of $T_n \cdot T_n$. 
For any $(a, d)\in {\bf R}_{2h-1}$, there are $2h-1=n-1$ elements $b=c \in  {\bf R}_{2h-1}$
such that $(a \otimes a \otimes b) \vartriangle (c \otimes c \otimes d)=(a, a, d)$, 
hence $$(\sum a \otimes a \otimes  b ) \vartriangle (\sum c \otimes c \otimes  d ) = \sum (n-1) (a \otimes a \otimes  d) . $$
Thus we have ${\cal Col}_X(T_n \cdot T_n) = \sum_{a,d \in {\bf R}_{n-1} } (n-1) (a \otimes  a \otimes d) $.

}
\end{example}

\begin{corollary}
For any positive  integers $m, \ell$, there exist a knotoid $K$ and a quandle $(X, *)$ such that 
${\cal Col}_X(K)$ consists of more than $m$ monomials whose coefficients are all larger than $\ell$.
\end{corollary}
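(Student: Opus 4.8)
The plan is to take $K$ to be the product $T_n \cdot T_n$ and $X = {\bf R}_{n-1}$ exactly as in Example~\ref{ex:Tnm}, and then to choose the even integer $n = 2h$ large. The key observation is that both the number of monomials and the size of the coefficients of ${\cal Col}_X(T_n \cdot T_n)$ are governed by $n$, so a single sufficiently large choice of $n$ can force both to be large at once.

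First I would invoke the computation of Example~\ref{ex:Tnm}, which already establishes
$${\cal Col}_X(T_n \cdot T_n) = \sum_{a, d \in {\bf R}_{n-1}} (n-1)\,(a \otimes a \otimes d)$$
by applying Lemma~\ref{lem:prod} together with the matching product. If one prefers to see this directly: starting from ${\cal Col}_X(T_n, R) = \sum_{a,b} a \otimes a \otimes b$, one evaluates $(a \otimes a \otimes b) \vartriangle (c \otimes c \otimes d)$, which is nonzero exactly when $b = c$ and then equals $a \otimes a \otimes d$; summing over the $n - 1$ matching indices $b = c$ produces the coefficient $n - 1$.

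Next I would read off the relevant counts. The monomials of ${\cal Col}_X(T_n \cdot T_n)$ are indexed by pairs $(a, d) \in {\bf R}_{n-1} \times {\bf R}_{n-1}$, so there are exactly $(n-1)^2$ of them, and each appears with coefficient $n - 1$. It therefore suffices to choose $h$ so that $n - 1 = 2h - 1 > \max\{\ell, \sqrt{m}\}$; then $(n-1)^2 > m$ yields more than $m$ monomials, while $n - 1 > \ell$ makes every coefficient exceed $\ell$. Since $n - 1$ is odd the dihedral quandle ${\bf R}_{n-1}$ is connected, consistent with the standing assumption that all quandles are connected.

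There is no essential obstacle here, as the statement is a direct consequence of Example~\ref{ex:Tnm}; the only subtlety worth flagging is that iterating the product further does \emph{not} increase the number of monomials, which remains $(n-1)^2$ while only the coefficient grows. Hence the requirement of more than $m$ monomials must be met by enlarging $n$ rather than by lengthening the product, after which the two-fold product already produces coefficients $n-1$ exceeding any prescribed $\ell$.
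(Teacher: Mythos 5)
Your proposal is correct and rests on the same ingredients as the paper's proof, namely the computation of ${\cal Col}_X(T_n\cdot T_n)$ from Example~\ref{ex:Tnm} via Lemma~\ref{lem:prod}. The only (harmless) difference is that the paper iterates to the $k$-fold product $(T_n)^{\cdot k}$, obtaining coefficients $(n-1)^{k-1}$, and lets both $n$ and $k$ grow, whereas you observe that the two-fold product already suffices since a single large $n$ simultaneously makes the monomial count $(n-1)^2$ exceed $m$ and the coefficient $n-1$ exceed $\ell$.
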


\begin{proof}
Repeated applications of the computations given in  Example~\ref{ex:Tnm}
show that ${\cal Col}_{ {\bf R}_{n-1} } ( ( T_n)^{\cdot k}  )$, where the knotoid is the $k$-fold knotoid product, 
is equal to $\sum_{a,b \in {\bf R}_{n-1} } (n-1)^{k-1} a \otimes b $.
Thus sufficiently large choices of $n$ and $k$ for given $m$ and $\ell$ give the result.
\end{proof}

\begin{question}\label{q:values}
{\rm
What are the possible values of the invariant?
There are more specific questions that can be raised, such as:
 Is it true that for any positive integer $n$, there exist a knotoid $K$ and a quandle $(X,*)$
such that ${\cal Col}_X(K)$ consists of $n$ nonzero monomials?
Are there any relations among coefficients of nonzero monomials?
}
\end{question}

\section{Planar equivalence and invertibility}\label{sec:invertibility}

In this section we investigate relations between planar equivalence and invertibility.
For a planar or spherical 
% oriented 
knotoid  $K$, we denote by $rK$ the knotoid   obtained from $K$ by reversing its orientation. 
If a planar knotoid $(K, R)$ is planarily equivalent to  $(rK, R)$, then it is called 
{\it (planarily) invertible}. Otherwise it is called {\it (planarily) noninvertible}.
Spherical invertibility is defined analogously, but without reference to a region at infinity.

\begin{example}
{\rm
Let $T_n$ be a knotoid depicted in Figure~\ref{T2n}, $n>0$. 
Let $R$ be the region at infinity.
Then $(rT_n , R)$ is not equivalent to $(T_n, R)$.
This is because for $(T_n,R)$, $p_\infty$ and $p_\iota$ both belong to $R$, but $p_\tau$ does not,
while for $(rT_n, R)$, $p_\infty$ and $p_\tau$ both belong to $R$, but $p_\iota$ does not.

However, $T_n$ and $rT_n$ are spherically equivalent, as the outer arc can be isotoped through infinity,
so that $T_n$ is spherically invertible.
}
\end{example}

\begin{remark}
{\rm 
The argument in the preceding example motivates to define the following concept of  distances.
Let $\gamma_{\iota}$, $\gamma_{\tau}$ and $\gamma_{\iota\tau}$ 
be oriented paths from $p_\infty$ to  $p_\iota$, $p_\infty$ to  $p_\tau$ and
$p_\iota$ to $p_\tau$, respectively, all whose intersections with $K$ are transversal crossing points, away from the crossings of $K$.
Assume  that these arcs intersect with $K$ in a finitely many double crossings.

Then the crossing distance  from  the terminal  point $p_\infty$ to the initial point $p_\iota$ 
(resp. the terminal point $p_\tau$)
is the %number of 
minimum number of crossings of $\gamma_{\iota}$ and $K$ among all equivalent diagrams, and denoted by $d_\iota$ (resp. $d_\tau$).
The distance between the initial and terminal points, $d_{\iota\tau}$ was similarly defined as the \textit{height} of $K$ in \cite{GK}.
The distances $d_\iota$ and  $d_\tau$ can be used to detect invertibility.
}
\end{remark}

A quandle is called {\it involutive} if its right action is involutive, i.e.~$(x*y)*y=x$ for all elements $x,y$.
For example the dihedral quandles ${\bf R}_n$ are involutive.
As colorings by involutive quandles are well defined for unoriented diagrams, 
there is a bijection (identity correspondence)  between the sets of colorings before and after the orientation reversal.
Since the reversal changes the initial and terminal end points, we have the following.

\begin{lemma}
Let $X$ be an involutive quandle.
For planar  knotoid $(K, R)$ and for any quandle triple $(a,b,c)$,
we have 
 $$| {\rm Col}^{(a,b,c)}_X(K, R) | = | {\rm Col}^{(a,c,b)}_X(r K, R) | .$$
\end{lemma}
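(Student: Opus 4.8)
The plan is to exhibit an explicit bijection between ${\rm Col}^{(a,b,c)}_X(K,R)$ and ${\rm Col}^{(a,c,b)}_X(rK,R)$ realized by the identity on the underlying colorings, exploiting that $X$ is involutive. First I would invoke the observation made just before the statement: since the right action of $X$ is involutive, the coloring condition at each crossing is insensitive to the orientation of the strands, so an arc coloring of $K$ is literally the same data as an arc coloring of $rK$. This yields a canonical identity correspondence ${\rm Col}_X(K) \to {\rm Col}_X(rK)$, $\mathcal{C} \mapsto \mathcal{C}$.

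Next I would check that this identification is compatible with the region colorings. By Lemma~\ref{lem:color} the region coloring $\tilde{\mathcal{C}}$ extending a given arc coloring is uniquely determined, so it suffices to observe that the region-coloring rules (C)--(E) of Figure~\ref{qcolors} are themselves orientation-independent for an involutive quandle: crossing an arc colored $y$ alters the region color by the operation $*\,y$, which coincides with its own inverse precisely when $X$ is involutive, so the rule relating adjacent region colors does not change when the arc's orientation is reversed. Consequently the unique region extension of $\mathcal{C}$ on $rK$ agrees pointwise with that on $K$; in particular the color $\mathcal{C}(p_\infty)$ at the region at infinity is preserved and remains equal to $a$.

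Finally I would track the endpoints. Reversing orientation fixes the diagram setwise and fixes the region $R$ (hence the point $p_\infty$), but interchanges the roles of the two endpoints: the initial point of $rK$ is the point that is $p_\tau$ for $K$, and the terminal point of $rK$ is the point that is $p_\iota$ for $K$. Therefore a coloring $\mathcal{C} \in {\rm Col}^{(a,b,c)}_X(K,R)$, characterized by $(\mathcal{C}(p_\infty),\mathcal{C}(p_\iota),\mathcal{C}(p_\tau)) = (a,b,c)$, is sent by the identity correspondence to a coloring of $rK$ whose specified triple is $(\mathcal{C}(p_\infty),\mathcal{C}(p_\tau),\mathcal{C}(p_\iota)) = (a,c,b)$, that is, to an element of ${\rm Col}^{(a,c,b)}_X(rK,R)$. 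As this map is its own inverse it is a bijection, giving the claimed equality of cardinalities.

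The only step carrying real content, the rest being bookkeeping, is confirming that the region coloring — and in particular the color assigned to $p_\infty$ — is genuinely preserved under the identity correspondence. This rests entirely on involutivity making the region-transition rule orientation-free, and would be the point to verify carefully against the precise conventions of Figure~\ref{qcolors}.
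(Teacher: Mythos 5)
Your proof is correct and follows the same route as the paper, which derives the lemma from the identity correspondence between colorings of $K$ and $rK$ afforded by involutivity, together with the observation that orientation reversal swaps $p_\iota$ and $p_\tau$ while fixing $R$. Your additional verification that the region-coloring rule is orientation-free for involutive quandles (so the color at $p_\infty$ is preserved) is a detail the paper leaves implicit but is the right point to check.
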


\begin{example}
{\rm
In the proof of Proposition~\ref{prop:infinite},  using the same notation, 
we have $| {\rm Col}^{(2k, 0, 2)}_X (T_n, R_k)|=1$.
Using the computation in Example~\ref{ex:Xi}: 
$${\cal Col}_X (K,R_k) =\sum_{ x, y \in X } ( ( 1-2k)x + (2k)y ) \otimes  x\otimes ( 2y-x) . $$

If $x=2$ and $2y-x=2$, then we obtain $y=2$, then $(1-2k)x + (2k)y=2$.
Hence if $k \neq 1$, then $| {\rm Col}^{(2k, 0, 2)}_X ( r T_n, R_k)|=0$, so that 
$(T_n, R_k)$ and $(rT_n, R_k)$ are not planarily equivalent, and $(T_n, R_k)$ is noninvertible.

}
\end{example}

By varying values of $n$ and $k$ in the preceding example, we obtain the following result.

\begin{theorem}
For any positive integer $m$, there exists a spherically invertible knotoid $K$ with regions $R_i$, $i=1, \ldots, m$, 
such that $(K, R_k)$ are planarily noninvertible for $i=1, \ldots, m$.
\end{theorem}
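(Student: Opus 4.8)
The plan is to exhibit the required knotoid explicitly from the family $T_n$ of Example~\ref{ex:Tn}, reusing the coloring computations already in hand, and to check that a single member of this family can be made to serve all $m$ regions at once. I would fix $K = T_n$ with $n = 2h$ even and take the dihedral quandle $X = {\bf R}_{n-1}$, exactly as in Proposition~\ref{prop:infinite} and Example~\ref{ex:Xi}. The spherical half of the statement is then immediate: by the example at the beginning of Section~\ref{sec:invertibility}, the outer arc of $T_n$ can be isotoped through the point at infinity, so $T_n \cong_{\Sph^2} r T_n$, and hence $K$ is spherically invertible, independently of $m$.

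For the planar half I would argue region by region using the end specified coloring invariant. By Example~\ref{ex:Xi} we have ${\cal Col}_X(T_n, R_k) = \sum_{x,y \in X}\big((1-2k)x + 2ky\big)\otimes x \otimes (2y - x)$, and reading off the summand $(x,y) = (0,1)$ gives $|{\rm Col}^{(2k,0,2)}_X(T_n, R_k)| = 1$ (as already recorded in Proposition~\ref{prop:infinite}). Since ${\bf R}_{n-1}$ is involutive, the preceding example computes the reversed count and shows that the triple $(2k,0,2)$ cannot be realized on $r T_n$ outside the single exceptional region flagged there; that is, $|{\rm Col}^{(2k,0,2)}_X(r T_n, R_k)| = 0$ for $k \neq 1$. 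For every such $k$ the invariant therefore separates $(T_n, R_k)$ from $(r T_n, R_k)$, so $(T_n, R_k) \not\cong_{\R^2} (r T_n, R_k)$ and $(T_n, R_k)$ is planarily noninvertible.

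To produce the $m$ regions promised by the statement, I would take $n$ sufficiently large and select the regions $R_2, R_3, \ldots, R_{m+1}$, relabeling them as $R_1, \ldots, R_m$. The main point requiring care — beyond the computations already established — is that a single choice of $n$ works for all these regions simultaneously: one needs each $R_k$ in this range to be a genuine region of $T_n$ (which holds once $m + 1 \leq n - 1$, i.e.\ $n \geq m + 2$), none of the chosen indices to coincide with the exceptional region of the preceding example, and the residues $2k \in \Z_{n-1}$ to remain distinct and nonzero so that the distinguishing triples are well defined. Each of these is an arithmetic condition in $\Z_{n-1}$ that is satisfied once $n$ is taken large enough, so the proof reduces to this bookkeeping layered on top of the earlier examples.
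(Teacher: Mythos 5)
Your proposal is correct and takes essentially the same route as the paper: the paper's proof is a one-line appeal to the preceding example, taking $K=T_n$ (spherically invertible by the example opening Section~\ref{sec:invertibility}) and contrasting $|{\rm Col}^{(2k,0,2)}_X(T_n,R_k)|=1$ with $|{\rm Col}^{(2k,0,2)}_X(rT_n,R_k)|=0$ for non-exceptional $k$, exactly as you do. Your additional bookkeeping --- choosing $n\geq m+2$, avoiding the exceptional index, and keeping the residues $2k$ distinct in $\Z_{n-1}$ --- is the content the paper leaves implicit in ``varying $n$ and $k$,'' and it is handled correctly.
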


\section{Planar cocycle invariants with end specified colors}\label{sec:cocycle}

In this section we define quandle cocycle invariants for knotoids in a manner similar to
\cite{CJKLS}. Such invariants have been defined in \cite{Cazet}, and the difference here is to use
end restricted colorings and obtain stronger invariants, in particular of planar knotoids, that distinguish 
properties between planar versus spherical equivalences.
In the following section we use these invariants to study planar chirality.

We recall \cite{CJKLS} that a function $\phi: X \times X \rightarrow A$, where $X$ is a quandle and $A$ is an abelian group, is called a {\it quandle 2-cocycle} if it satisfies 
$$ \phi (x,x)=0 \qquad\text{and}\qquad \phi(x,y) - \phi (x,z) +  \phi(x*y, z) - \phi(x*z, y*z)=0$$
for all $x,y,z \in X$. 
% and if $\phi (x,x)=0$ for all $x\in X$. 
The abelian group  of all quandle 2-cocycles is denoted by $Z^2_{\rm Q}(X,A)$. 
Further details of quandle (co)homology theory can be found in \cite{CKS}.

\begin{definition}
{\rm
Let ${\cal C}: {\cal A}\cup {\cal R} \rightarrow X$ be an  end specified coloring
with respect to a quandle triple $(a,b,c)$  of $K$ by $X$ such that 
$({\cal C} (p_\infty) , {\cal C} (p_\iota) , {\cal C} (p_\tau)  )=(a,b,c) $.

Let $(X,*)$ be a quandle, $(K,R)$ a planar knotoid diagram.
Let $\phi \in Z^2_Q (X, A)$ be a quandle 2-cocycle with a finite  abelian coefficient group $A$.
Let $B_\phi (K, {\cal C} ) $ denote the product of the cocycle values $\phi(x_j, y_j)^{\epsilon(j)}$ over all crossings $j$
with respect to a coloring ${\cal C}$.
Here $\epsilon(j)=\pm1$, depending on the sign of crossing $j$.
Recall that the product is with respect to the group operation of $A$, which is abelian but denoted with multiplicative notation.

The {\it planar quandle 2-cocycle invariant} is defined by 
$$ \Phi_\phi^{(a,b,c)} (K) = \sum_{ C \in {\rm Col}_X^{(a,b,c)} (K, R) } B_\phi (K, {\cal C} ) .
$$
Consider the quandle module $ \Z A [ X] ( = (\Z [A] ) [X] )  $, whose elements consists of
finite sums $\sum a_x x$, $a_x \in \Z[A]$.
Then define 
$$ {\Phi}_\phi (K,R) = \sum \Phi_\phi^{(a,b,c)} (K,R) (a \otimes b \otimes c ) \in  \Z A [ X] ^{\otimes 3}.
$$

Similarly, the 2-cocycle invariants $\Phi_\phi^{(b,c)}(K)$  and $\Phi_\phi (K)$  are defined for spherical knotoids using ${\rm Col}^{(b,c)}(K)$ instead of  ${\rm Col}^{(a, b,c)}(K, R)$.
}
\end{definition}

In a similar manner, end-specified 3-cocycle invariants can be also defined; see \cite{CJKLS} for details on 3-cocycle invariants of knots.

\begin{definition}\label{def:3cocyinv}
{\rm 

Let $\psi \in Z^3_Q(X, A)$ be a quandle 3-cocycle of $X$ with a finite abelian coefficient group $A$.
For a region (and arc) coloring ${\cal C}$,
let $B_\psi (K, {\cal C} ) $ denote the product of the cocycle values $\psi(u_j, x_j, y_j)^{\epsilon(j)}$ over all crossings $j$
with respect to a coloring ${\cal C}$, where $u_j$ is the source color at the crossing.

Define the {\it planar quandle 3-cocycle invariant for knotoids } to be
$$ \Psi_\psi^{(a,b,c)}(K) = \sum_{ C \in {\rm Col}^{(a,b,c)}_X (K) } B_\psi (K, {\cal C}) . % (a \otimes b \otimes c) .
$$
Then 
define 
$$ {\Psi}_\psi (K) = \sum \Phi_\psi^{(a,b,c)} (K) (a \otimes b \otimes c) \in  \Z A [ X]^{\otimes 3}.
$$

}
\end{definition}

By  routine check of Reidemeister moves as in \cite{CJKLS}, we have that these cocycle invariants are well defined.

\section{Planar equivalence and chirality }\label{sec:chirality}

In this section we discuss differences of chirality of knotoid between spherical and planar equivalences.
For a spherical or planar knotoid diagram $K$, we denote by $mK$ the knotoid obtained from $K$ by reversing all its crossings.

First we apply the quandle 3-cocycle invariant defined in the preceding section to show that there exists 
a knotoid that is achiral spherically but chiral planarily.

Let $K$ be a figure eight knotoid diagram depicted in solid arcs in Figure~\ref{fig8}.
We show that $K'$ is spherically amphicheiral but not planarily.
Let $K'$ be a diagram obtained by replacing the outer solid arc by the dotted arc.
By rotating $K$ by 180 degrees about the midpoint of the two end points, 
we see that $K'$ is equivalent to $rmK$.
Since this replacement of the outer arc is a spherical move, we have that $K'= rmK$, hence $K \cong_{\Sph^2} rmK$.
The diagram $K'$ with the exterior region $R_\infty$ at infinity is the same as $(K, R_1)$, 
so we have $(K', R_\infty) =(rmK, R_\infty)  \cong_{\R^2} (K, R_1)$.

\begin{figure}[htb]
\begin{center}
\includegraphics[width=1.5in]{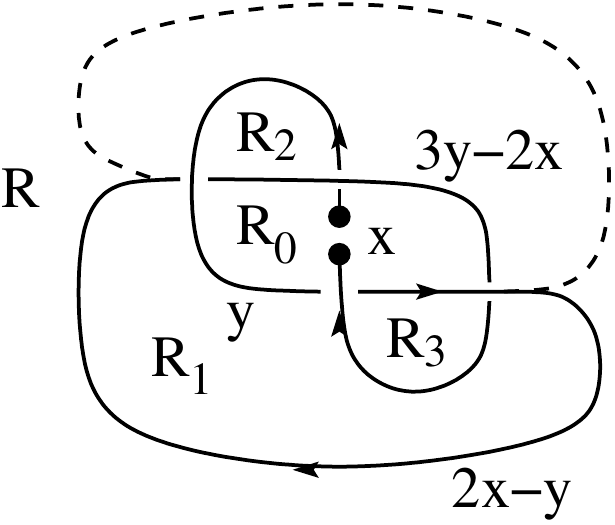}
\end{center}
\caption{}
\label{fig8}
\end{figure}

\begin{proposition}
The figure eight knotoid $K$ as depicted in Figure~\ref{fig8} has the property 
that 
$K \cong_{\Sph^2} rmK$, but a planar diagram $K'$ spherically equivalent to $rmK$ has the property that 
$(K , R) \not \cong_{\R^2} (K', R_\infty)$.
\end{proposition}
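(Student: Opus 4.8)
```latex
The plan is to use the planar quandle 2-cocycle invariant $\Phi_\phi$ from Section~\ref{sec:cocycle} to distinguish the two planar knotoids $(K,R)$ and $(K',R_\infty)$. The key point is that we have \emph{already} established the spherical equivalence $K\cong_{\Sph^2} rmK$ together with the planar identification $(K',R_\infty)\cong_{\R^2}(K,R_1)$ in the discussion preceding the statement. Thus it suffices to exhibit a quandle $X$, a 2-cocycle $\phi\in Z^2_Q(X,A)$, and a quandle triple $(a,b,c)$ for which the end-specified cocycle invariant takes different values on $(K,R)$ versus on $(K,R_1)$. Since both planar diagrams share the same underlying spherical knotoid and the same endpoints, the only difference lies in which region is sent to infinity, and so the distinguishing feature must come from the color specified at $p_\infty$. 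This is precisely the refinement that the end-specified colorings provide, so the whole strategy rests on a good choice of coloring data.

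First I would fix a small Alexander quandle, most naturally a dihedral quandle ${\bf R}_\ell$ for a small odd $\ell$ (the figure-eight knot is famously colored by ${\bf R}_5$), and identify explicitly all colorings of the arcs of $K$ by solving the crossing relations; because $K$ has only four crossings this is a finite and tractable computation. Next, using Lemma~\ref{lem:color}, I would extend each arc coloring to a region coloring and record, for each coloring, the triple of colors $({\cal C}(p_\infty),{\cal C}(p_\iota),{\cal C}(p_\tau))$ at the two candidate regions-at-infinity $R$ and $R_1$. The two diagrams have the same arc colorings and the same endpoint colors $b,c$, but generically differ in the region color assigned to the point at infinity, so the partition of colorings into quandle triples $(a,b,c)$ will differ between $(K,R)$ and $(K,R_1)$.

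Then I would compute the Boltzmann weight $B_\phi(K,{\cal C})=\prod_j \phi(x_j,y_j)^{\epsilon(j)}$ for each coloring using a nontrivial cocycle $\phi$ (for ${\bf R}_5$ the standard Mochizuki 2-cocycle into $\Z_5$ is the natural candidate), and assemble the invariant ${\Phi}_\phi(K,R)=\sum \Phi_\phi^{(a,b,c)}(K)\,(a\otimes b\otimes c)$. By the well-definedness of these invariants under Reidemeister moves, noted after Definition~\ref{def:3cocyinv}, any difference in the resulting elements of $\Z A[X]^{\otimes 3}$ certifies planar inequivalence. The argument concludes by exhibiting a single triple $(a,b,c)$ on which $\Phi_\phi^{(a,b,c)}$ differs between the two planar diagrams, which via the equivalences $(K',R_\infty)\cong_{\R^2}(K,R_1)$ and $K\cong_{\Sph^2}rmK$ yields $(K,R)\not\cong_{\R^2}(K',R_\infty)$.

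The main obstacle I anticipate is \emph{sensitivity}: a priori the bare end-specified coloring-count invariant $\sum_a|{\rm Col}^{(a,b,c)}|$ might not separate the two diagrams, since the set of arc colorings is identical and only the infinity-region color is permuted; one must verify that this permutation genuinely moves colorings between distinct triples rather than acting trivially. If the coloring count alone fails to distinguish them, the cocycle weights must do the work, and so the real content is checking that the chosen $\phi$ is not a coboundary on the relevant colorings and that its Boltzmann weights are asymmetric with respect to the $\iota\leftrightarrow\tau$ swap induced by passing to $rmK$. Confirming this asymmetry for an explicit small cocycle—essentially verifying that $K$ is cocycle-detectably chiral once the infinity region is specified—is the computational heart of the proof, and guaranteeing a nonzero difference is where care is needed.
```
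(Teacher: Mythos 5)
Your overall strategy is the same as the paper's: grant the spherical equivalence $K\cong_{\Sph^2} rmK$ and the identification $(K',R_\infty)\cong_{\R^2}(K,R_1)$ from the discussion preceding the statement, then distinguish $(K,R)$ from $(K,R_1)$ by an end-specified quandle cocycle invariant built on ${\bf R}_5$-colorings. You also correctly anticipate the main danger: the coloring counts alone do not separate the two diagrams (the paper verifies $|{\rm Col}^{(1,0,0)}|=1$ for both; in fact the full counting invariants coincide, since for fixed $x$ the map $y\mapsto 2y-x$ permutes ${\bf R}_5$, so both diagrams yield $\sum_{a,b} a\otimes b\otimes b$).

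However, the step you rely on to finish --- ``the cocycle weights must do the work,'' using ``the standard Mochizuki 2-cocycle of ${\bf R}_5$ into $\Z_5$'' --- fails. The Mochizuki cocycle for dihedral quandles ${\bf R}_p$ is a \emph{3-cocycle}; the second quandle cohomology $H^2_{\rm Q}({\bf R}_p;A)$ vanishes for odd primes $p$, so every 2-cocycle of ${\bf R}_5$ is a coboundary. For a coboundary $\delta f$ the Boltzmann weight of a knotoid telescopes along the open strand to a quantity depending only on the endpoint colors $(b,c)$, so $\Phi_{\delta f}^{(a,b,c)}$ is the coloring count times a factor independent of $a$ --- exactly the information you already noted is insufficient. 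More structurally, a 2-cocycle weight sees only arc colors, and the two planar diagrams carry identical arc colorings; the only data that changes is which region color sits at the point at infinity. This is why the paper instead uses the end-specified \emph{3-cocycle} (shadow) invariant $\Psi_\psi$ of Definition~\ref{def:3cocyinv}, whose crossing weights $\psi(u_j,x_j,y_j)$ involve the source region color $u_j$: with the Mochizuki 3-cocycle on ${\bf R}_5$ one computes $\Psi_\psi^{(1,0,0)}(K,R)=u^4$ versus $\Psi_\psi^{(1,0,0)}(K,R_1)=u$, which completes the argument. To repair your proof you would either switch to the 3-cocycle invariant, or replace ${\bf R}_5$ by a quandle with nontrivial $H^2_{\rm Q}$ and verify by direct computation that the re-partitioning of colorings into triples actually changes the state-sum --- neither of which is automatic from what you have written.
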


\begin{proof}
We have shown that $K \cong_{\Sph^2} rmK$, and below we show $(K, R) \not \cong_{\R^2} (K, R_1)$.
It is well known that $K$ is nontrivially colored by ${\bf R}_5$.
A coloring of arcs are indicated in the figure with variables.
Specifically, an element $x$ of ${\bf R}_5$ is assigned to the end points and the region $R_0$ that they belong to,
and another adjacent arc is colored by $y$. The colors of remaining arcs are determined by crossings, and are as indicated
in the figure. It is checked that the indicated coloring is well defined modulo $5$, and uniquely determined by
a choice of $x, y \in {\bf R}_5$.
The requirement that the color of $R_0$ is $x$ determines all  colors of the other regions uniquely,
and is computed as ${\rm Col}(R)=y$, ${\rm Col}(R_1)=2y-x$, ${\rm Col}(R_2)=y$ and ${\rm Col}(R_3)=3x-2y$.
Thus all colors, on arcs and regions, are uniquely determines by $x, y \in {\bf R}_5$.

If we choose $(p_\infty, p_\iota, p_\tau)=(y, x, x)=(1,0,0)$, for example, we obtain a unique nontrivial coloring and  that 
${\rm Col}^{(1,0,0)} (K, R)=1$. 
On the other hand, if we choose $R_1$ as the region at infinity, then for the triple $(1,0,0)$, we have 
${\rm Col}(R_1)=2y-x=1$ and $x=0$, which implies $y=3$, and we obtain ${\rm Col}^{(1,0,0)} (K, R)=1$ as well.

We apply the cocycle invariant in Definition~\ref{def:3cocyinv}, with the dihedral quandle ${\bf R}_5$ with the 
coefficient group of cohomology $\Z_5$. 
The values of $\Psi_\psi^{(a,b,c)}$ are in the group ring $\Z[\Z_5]$, and written as polynomial in multiplicative generator $u$ of 
$\Z_5$.
There is a nontrivial 3-cocycle in $Z^3_{\rm Q}({\bf R}_5, \Z_5)$. The general case of 3-cocycle for ${\bf R}_p$ with prime $p$ is defined in \cite{Mochi}, 
simplified in \cite{AsamiSatoh,HashiTana} and given by the following formula:
$$\psi(x,y,z)=(x-y) [ (2z-y)^p + y^2 - (2z)^p) ] /p = (x-y) \sum_{i=1}^{p-1} i^{(-1)} (-x)^i (2y)^{p-i} \in \Z_p .$$
The cocycle weights for the top two positive crossings from left to right computes, with given variables,
$\psi (y, 3y-2x, y)$ and $\psi ( y,y, 3y-2x)=0$.
Those bottom two negative crossing from left and right read
$- \psi (x,y,x)$ and $- \psi (y, 3y-2x, 2x-y)$. 
With these cocycle values  we compute that 
$ \Psi_\psi^{(1,0,0)}(K, R)=u^4$, where $u$ is a multiplicative generator of $\Z_5$.

For $K'$, the contributions of crossings are the same, when only the outer arc is changed and labels are kept,
and the region at infinity is changed from $R$ to $R_1$.
Similar computations with ${\rm Col}(R_1)=2y-x=1$ and $x=0$ shows that  $ \Psi_\psi^{(1,0,0)}(K, R_1)=u$. 
Hence we obtain $(K, R) \not \cong_{\R^2} (K, R_1)$. 
\end{proof}

\begin{remark}
{\rm
We conjecture that 
there are infinitely many spherically achiral knotoids that are planarily chiral.
A family of potential candidate is depicted in Figure~\ref{2Bfamily}, that are  generalizations
of the figure eight knotoid, where the center arc is cut into a pair of end points.
(For some integers they have multiple components.)
The argument similar to that in proof of the preceding proposition shows that they are spherically achiral.

We also conjecture that for any positive integer $n$, there is a  planar knotoid diagram $K_n$ with the exterior region $R_0$ at infinity that is planarily achiral $(K, R_0) \cong_{\R^2} (rmK, R_0)$,
such that there are regions $R_k$, $k=1, \ldots, n$ with $(K_n, R_i) \not \cong_{\R^2} (K_n, R_j)$
for $i \neq j$, $i, j = 0, 1, \ldots, n$. Note that in this case all planar knotoids $(K_n, R_i)$ are spherically equivalent,
so that they are all spherically achiral.
A potential example is depicted in Figure~\ref{amphicheiral}. A rotation shows that this diagram is planarily achiral.
This example is motivated by $8_3$ which is achiral. By adding more twists on the top and bottom twists,
we obtain similar candidates.
}
\end{remark}

\begin{figure}[htb]
\centering
\begin{minipage}{0.5\textwidth}
\centering
\includegraphics[width=2.2in]{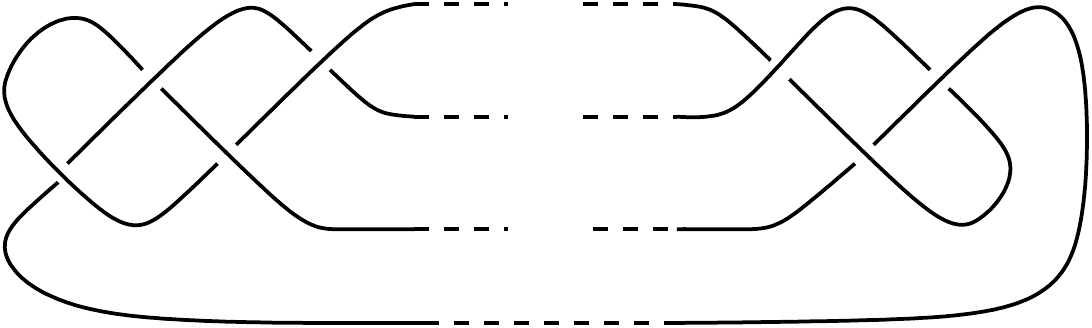}
\caption{}
\label{2Bfamily}
\end{minipage}%
\begin{minipage}{.5\textwidth}
\centering
\includegraphics[width=2.2in]{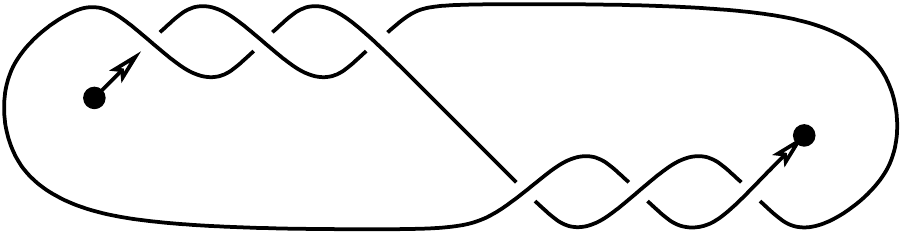}
\caption{}
\label{amphicheiral}
\end{minipage}
\end{figure}

%\begin{theorem}
%For any positive integer $n$, there exists a spherically achiral knotoid $K$ such that 
%there exist $n$ distinct regions $R_1, \ldots, R_n$ satisfying the condition 
%$(K, R_i) \not \cong_{\R^2} ( rm K, R_i)$ for all $i=1, \ldots, n$ and 
%$(K, R_i) \not \cong_{\R^2} ( K, R_j ) $ for $i\neq j$, $i, j=1, \ldots, n$.
%\end{theorem}

\section{Triangular quandle $2$-cocycle invariant}\label{sec:triangular}

As before, let $K$ be a spherical oriented knotoid diagram,
 $R_\infty$ be the region specified as the region at infinity,
  $p_\infty$ be a point in $R_\infty$, and  $p_\iota$, $p_\tau$ 
be the initial and terminal end points,  respectively.

Let $\gamma_{\iota}$, $\gamma_{\tau}$ and $\gamma_{\iota\tau}$ 
be oriented paths from $p_\infty$ to  $p_\iota$, $p_\infty$ to  $p_\tau$ and
$p_\iota$ to $p_\tau$, respectively, that goes under all arcs of $K$ and away from 
crossings of $K$.  
We may assume  that these arcs intersect with $K$ in finitely many double crossings.
We may  also assume that their self intersections or intersections between them are away from those with $K$ and arcs with $K$.

Let $X$ be a quandle and $\phi$ be its 2-cocycle with coefficient in an abelian group $A$ in multiplicative notation. 
We define the {\it triangular quandle $2$-cocycle invariant} for a knotoid $K$ as follows.
Let $X$ be a quandle, $(a,b,c)$ be a quandle triple, and ${\rm Col}^{(a,b,c)}_X (K)$ be the set of end specified colorings with respect to $(a,b,c)$.
Each arc of the paths $\gamma_h$, $h=\iota, \tau, \iota\tau$ inherit the same color of the region where it is included. Denote the same letter ${\cal C}$ for such a coloring.
Then for each crossing $\lambda \in \gamma_h \cap K$,  the weight $B({\cal C}, \lambda)$
is defined as in \cite{CJKLS} to be $\phi(x(\lambda), y (\lambda) )^{\epsilon (\lambda) } $, where 
$(x(c), y (c) )$ are the source colors at $\lambda$ for ${\cal C}$, and $\epsilon (c) $ denotes the sign of $c$.
Let $B_\phi ({\cal C}, \gamma_h) = \prod_{\lambda \in  \gamma_h \cap K} \phi(x(\lambda), y (\lambda) )^{\epsilon (\lambda) }  \in A$.

\begin{figure}[htb]
\begin{center}
\includegraphics[width=3in]{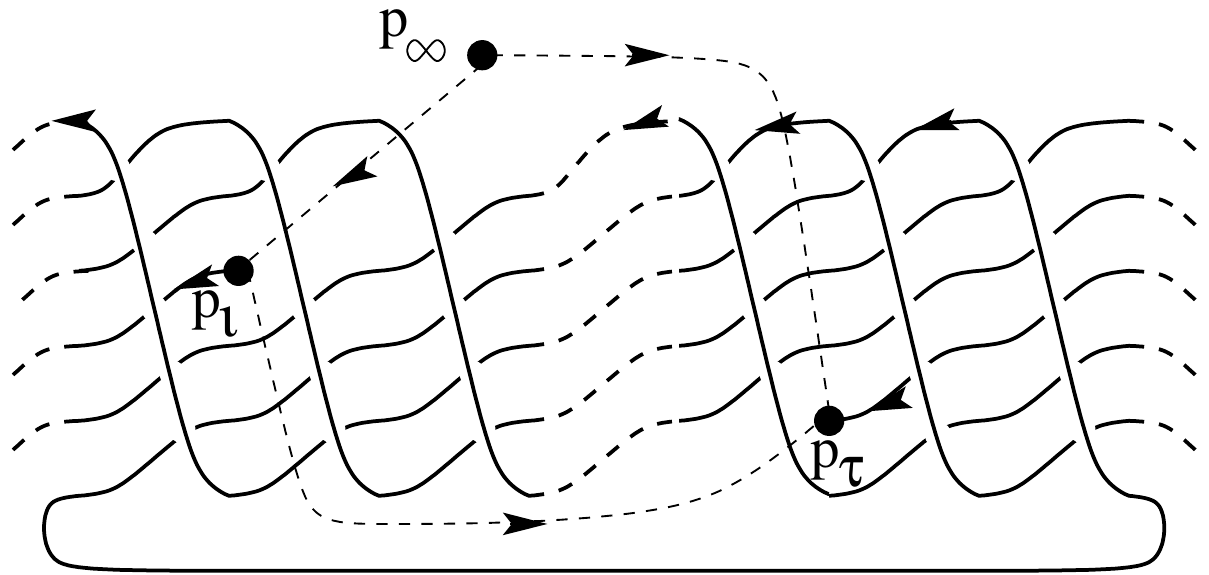}
\end{center}
\caption{}
\label{T6n}
\end{figure}

\begin{definition}
{\rm
For a planar knotoid $(K, R)$, 
define an element  
$$\Phi_\phi^{(a,b,c)} (K, R, \gamma_h )  = \sum_{ {\cal C} \in {\rm Col}^{(a,b,c)}_X (K) }
B_\phi ({\cal C}, \gamma_h)   \in \Z[A]$$
in the group ring $\Z[A]$.
Also define 
$$\Phi_\phi(K, R, \gamma_h )  = \sum \Phi_\phi^{(a,b,c)} (K, R, \gamma_h ) (a \otimes b \otimes c) \in \Z A [X]. $$
Next  define the triangular 2-cocycle invariant to be the triple
$$\Phi_\phi^{(a,b,c)} (K, R) = ( \Phi_\phi^{(a,b,c)} (K, R, \gamma_\iota ) ,   \Phi_\phi^{(a,b,c)} (K, R, \gamma_{\iota \tau} )  ,\Phi_\phi^{(a,b,c) }(K, R, \gamma_\tau )  ) \in (\Z[A])^3 . $$
Also define 
$$\Phi_\phi(K, R) = ( \Phi_\phi (K, R, \gamma_\iota ) ,   \Phi_\phi(K, R, \gamma_{\iota \tau} )  ,\Phi_\phi (K, R, \gamma_\tau )  ) \in (\Z A [X]^{\otimes 3} )^3 . $$
}
\end{definition}

\begin{figure}[htb]
\begin{center}
\includegraphics[width=1in]{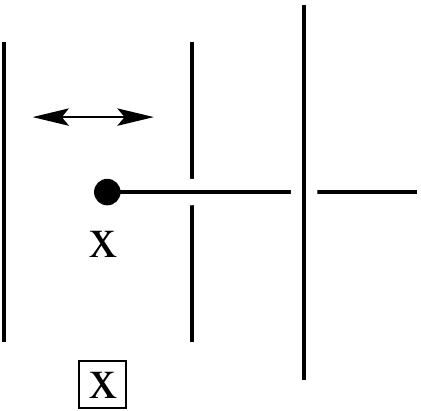}
\end{center}
\caption{}
\label{endpass}
\end{figure}

\begin{lemma}
The  triangular 2-cocycle invariant is well defined.
\end{lemma}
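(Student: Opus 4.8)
The plan is to verify that $\Phi_\phi^{(a,b,c)}(K,R)$ is unchanged under the two sources of indeterminacy in its definition: the choice of the underlying spherical knotoid diagram $K$ (via Reidemeister moves preserving the designated points $p_\infty, p_\iota, p_\tau$ and region at infinity $R$), and the choice of the paths $\gamma_\iota, \gamma_\tau, \gamma_{\iota\tau}$. Since the invariant is a triple with one entry per path, and each path contributes independently, it suffices to check each $\Phi_\phi^{(a,b,c)}(K,R,\gamma_h)$ separately for a fixed $h \in \{\iota,\tau,\iota\tau\}$. Throughout I would rely on Lemma~\ref{lem:color}, which guarantees that each arc-coloring ${\cal C} \in {\rm Col}^{(a,b,c)}_X(K)$ extends uniquely to a region coloring, so that the colors inherited by the arcs of $\gamma_h$ are well defined once ${\cal C}$ is fixed.

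First I would handle invariance under the choice of path. Fixing a coloring ${\cal C}$, any two admissible paths $\gamma_h, \gamma_h'$ with the same endpoints are homotopic in $\Sigma$, and a generic homotopy between them is realized by a finite sequence of elementary moves: passing an arc of the path across a crossing of $K$, creating or cancelling a pair of adjacent transversal intersections with a single arc of $K$ (a finger move), and sliding the path's intersection points past one another. The creation/cancellation of a cancelling pair contributes $\phi(x,y)^{+1}\phi(x,y)^{-1}=0$ in $A$ and so does not change $B_\phi({\cal C},\gamma_h)$. The move passing the path across a crossing of $K$ is precisely governed by the 2-cocycle condition $\phi(x,y)-\phi(x,z)+\phi(x*y,z)-\phi(x*z,y*z)=0$, exactly as in the region-coloring independence argument already cited for Figure~\ref{qcolors}(E) in Lemma~\ref{lem:regioncolor}; this is the analogue of the state-sum invariance check in \cite{CJKLS}. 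Summing $B_\phi({\cal C},\gamma_h)$ over all ${\cal C} \in {\rm Col}^{(a,b,c)}_X(K)$ then gives path-independence of $\Phi_\phi^{(a,b,c)}(K,R,\gamma_h)$.

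Next I would address invariance under Reidemeister moves on $K$. For each of the three Reidemeister moves I would set up the standard bijection between colorings of the diagrams before and after the move, matching the colors at $p_\infty, p_\iota, p_\tau$ so that the bijection respects the triple $(a,b,c)$; this is the same correspondence used implicitly in the earlier coloring lemmas. Under this bijection the crossing weights contributed by crossings of $\gamma_h$ with $K$ away from the move are unchanged, and the local contributions at the move are annihilated by the same three identities: the type~I axiom forces $\phi(x,x)=0$, the type~II move pairs a $+1$ and a $-1$ weight on equal cocycle arguments, and the type~III move is again resolved by the 2-cocycle condition. Having chosen representative paths $\gamma_h$ that avoid the local region of the move, these local cancellations suffice, and by the path-independence established above the particular choice of representative does not matter.

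The main obstacle I anticipate is bookkeeping at the designated points rather than any deep difficulty: I must confirm that the paths $\gamma_h$ can always be taken to avoid the supports of the Reidemeister moves and the endpoints, and—more delicately—that when a path terminates at $p_\iota$ or $p_\tau$ (which may lie in an interior region), its final intersections with $K$ are correctly accounted for and transform compatibly under both the path homotopy and the diagram moves. Care is also needed because the three paths share the endpoint $p_\infty$ and may mutually intersect; I would invoke the genericity assumption in the definition (self-intersections and mutual intersections kept away from intersections with $K$) so that each $\gamma_h$ contributes its weight independently and the triple structure is preserved. Once these configurations are arranged, the invariance reduces to the three local cocycle identities above, completing the argument.
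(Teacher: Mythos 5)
Your overall strategy---reduce well-definedness to invariance under generic homotopy of the paths $\gamma_h$ together with Reidemeister moves on $K$ performed away from the paths, and kill the local contributions via cancelling pairs and the $2$-cocycle condition---is the same as the paper's, and those parts are sound. However, there is a genuine gap: your list of elementary events for a generic homotopy of $\gamma_h$ omits the one move that is specific to knotoids, namely the path sweeping across an endpoint $p_\iota$ or $p_\tau$ of $K$. You flag this as a point where ``care is needed'' but never resolve it, and it cannot be sidestepped: two admissible paths with the same endpoints are homotopic in $\Sigma$, but in general not in the complement of the endpoints of $K$, so a homotopy between them generically crosses $p_\iota$ or $p_\tau$ at isolated times. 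At such an event the number of intersections of $\gamma_h$ with the end arc of $K$ changes by exactly one; this is not a cancelling pair, so neither the finger-move cancellation nor the cocycle identity applies, and without a further argument $B_\phi({\cal C},\gamma_h)$ could a priori change.

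The missing ingredient (see Figure~\ref{endpass}) is that near an endpoint the color of the ambient region, the color of the end arc, and hence the color carried by the portion of $\gamma_h$ lying in that region all coincide, by the endpoint condition on colorings. Consequently the single crossing created or destroyed when $\gamma_h$ slides under the end arc contributes $\phi(x,x)^{\pm 1}=1$, since $\phi$ is a quandle $2$-cocycle. This is the only genuinely new step beyond the standard state-sum argument of \cite{CJKLS}, and it is exactly the step your proposal leaves open. Two smaller points: since weights are attached only to crossings in $\gamma_h\cap K$, a Reidemeister move on $K$ performed away from the paths contributes no local weights at all, so your appeal to $\phi(x,x)=0$ for type I and to the cocycle identity for type III in that part of the argument is superfluous; and $A$ is written multiplicatively here, so a cancelling pair gives $\phi(x,y)\,\phi(x,y)^{-1}=1$ rather than $0$.
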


\begin{proof}
It is sufficient to show that the invariant values do not change under homotopy of the paths $\gamma_h$,
$h=\iota, \tau, \iota\tau$.
Generic homotopy is realized by Reidemeister moves I, II, III and the move passing through the end points of $K$. Reidemeister moves do not change the cocycle invariant values for the same reason as 
the regular 2-cocycle invariant.
When an arc goes through an end point, it happens in the region that included the end point, and the colors
of the region, the end arc, and the portion of $\gamma_h$ in the region, are all the same, before and after the move. When $\gamma_h$ goes under the end arc of $K$, see  Figure~\ref{endpass}, the contribution of the cocycle value at the crossing is trivial, since $\phi(x,x)=1$, $\phi$ being a quandle 2-cocycle.
Hence the claim follows.
\end{proof}

\begin{figure}[htb]
\begin{center}
\includegraphics[width=4.5in]{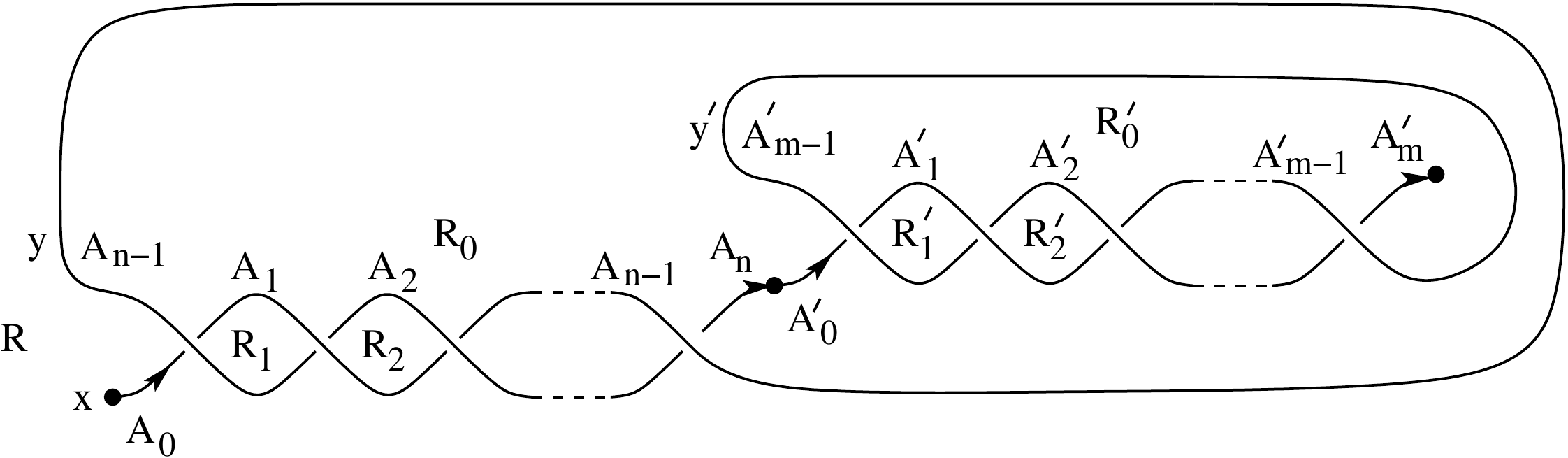}
\end{center}
\caption{}
\label{two}
\end{figure}

The triangular quandle cocycle invariant provides a stronger obstruction than end specified quandle
colorings, in the following sense.

\begin{proposition}
There exists a quandle $X$ such that  there is an infinite family of spherical knotoids with distinct planar
equivalence classes, that are distinguished by triangular cocycle invariants of $X$, while they cannot be distinguished by colorings only.
\end{proposition}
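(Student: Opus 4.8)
The plan is to realize the family inside the twist knotoids $T_N$ of Example~\ref{ex:Tn}, drawn in Figure~\ref{T6n}, and to exploit a mismatch between two kinds of data they carry: the end specified coloring invariant ${\cal Col}_X(T_N,R_k)$ will depend on the region index $k$ only \emph{modulo the order of the coloring quandle}, whereas the triangular invariant additionally records how deeply the point at infinity is buried inside the twist region. Concretely, I would fix a dihedral quandle $X={\bf R}_\ell$ together with a nontrivial $2$-cocycle $\phi\in Z^2_{\rm Q}({\bf R}_\ell,A)$ of the Mochizuki type already used in Section~\ref{sec:chirality} (cf.~\cite{Mochi}), and take $N\equiv 1 \pmod \ell$ so that, by Example~\ref{ex:Tn}, every pair $(x,y)$ extends to a coloring of $T_N$ by ${\bf R}_\ell$.

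First I would record the region colors. By the computation in Example~\ref{ex:Xi} one has ${\cal C}(R_k)=(1-2k)x+2ky$, which is periodic in $k$ with period $\ell$ once read in $\Z_\ell$. Consequently the summand indexed by $(x,y)$ in ${\cal Col}_X(T_N,R_k)$ is unchanged when $k$ is replaced by $k+\ell$, and therefore ${\cal Col}_X(T_N,R_k)={\cal Col}_X(T_N,R_{k+\ell})$. This already shows that the end specified colorings, and hence every coloring-type invariant built from them, cannot separate the planar knotoids $(T_N,R_k)$ and $(T_N,R_{k+\ell})$.

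Next I would turn to the triangular invariant. The paths $\gamma_\iota,\gamma_\tau$ run from $p_\infty$ to the endpoints underneath the diagram, and shifting the region at infinity from $R_k$ to $R_{k+\ell}$ forces each of these paths to pass under $\ell$ additional crossings of the twist region. For each such crossing $\lambda$ the path weight acquires a factor $\phi(x(\lambda),y(\lambda))^{\epsilon(\lambda)}$, which is a fixed nontrivial element of $A$ for a nonzero $\phi$ and generic $(x,y)$. Collecting these contributions over the surviving colorings, I would show $\Phi_\phi(T_N,R_k,\gamma_\iota)\neq \Phi_\phi(T_N,R_{k+\ell},\gamma_\iota)$ in $\Z[A]$, whence $(T_N,R_k)\not\cong_{\R^2}(T_N,R_{k+\ell})$ because the triangular invariant is a planar knotoid invariant, even though the colorings declared them equal.

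To upgrade this single comparison into an infinite family I would let $N\to\infty$ and collect the planar knotoids $(T_N,R_{j\ell})$ for $j=1,2,\dots$; all of them share one coloring invariant, while their triangular invariants should be pairwise distinct. The main obstacle is precisely this last point: since $A$ is finite, each weight $B_\phi({\cal C},\gamma_h)$ is a priori eventually periodic in $j$, so a naive count could collapse infinitely many members onto finitely many invariant values. I expect to resolve this either by passing to an infinite Alexander quandle, so that the target $\Z A[X]^{\otimes 3}$ becomes infinite dimensional and the color-triple grading keeps the members apart, or by tracking the full integer coefficients in $\Z[A]$ and showing that the distribution of path weights over the surviving colorings genuinely varies with $j$. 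Verifying that these cocycle contributions neither cancel in the sum over colorings nor become periodic is the crux of the argument.
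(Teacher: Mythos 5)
There is a genuine gap, in fact two, and both occur exactly where you flag uncertainty. First, your choice of cocycle is unsound: the Mochizuki-type nontrivial cocycles of dihedral quandles are \emph{3}-cocycles, and for an odd prime $\ell$ one has $H^2_{\rm Q}({\bf R}_\ell;\Z_\ell)=0$, so there is no cohomologically nontrivial 2-cocycle of ${\bf R}_\ell$ to feed into the triangular invariant; a coboundary telescopes along the under-passing paths $\gamma_h$ and contributes only data already determined by the end colors. The paper instead works with the four-element Alexander quandle $S_4=\Z_2[t]/(t^2+t+1)$ and its genuinely nontrivial $\Z_2$-valued 2-cocycle from \cite{CJKS}. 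Second, the step you yourself call the crux --- that shifting $R_\infty$ by a full period of $\ell$ twists changes the product of path weights --- is not established, and the heuristic ``a fixed nontrivial element of $A$ for generic $(x,y)$'' is unreliable: over one full period the colors repeat, the extra weights come in a patterned block, and nothing prevents their product from being the identity. The paper avoids having to control this by changing the construction: it uses the product knotoid $T_n\cdot T_2$ and shows that for the chosen end triple the coloring is forced to be \emph{trivial} on the $T_2$ factor, so putting $p_\infty$ in a region of that factor gives triangular invariant $(1,1,1)$, while putting it in $R_4$ of the $T_n$ factor gives $(\zeta,1,\zeta)$ by an explicit two-crossing computation. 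Only two regions per knotoid are ever compared.

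Your plan for producing an \emph{infinite} family is also not viable as stated, and your own objection is the reason: with a finite coefficient group $A$ the triangular invariants of $(T_N,R_{j\ell})$ are eventually periodic in $j$, so infinitely many regions of a single $T_N$ cannot be pairwise separated this way, and neither proposed fix is carried out. The paper resolves this differently: the infinitude comes not from many regions of one diagram but from the spherical knotoids $T_{4+6k}\cdot T_2$, $k\in\N$, being pairwise spherically inequivalent, which is proved separately by end-specified colorings with dihedral quandles ${\bf R}_\ell$ for varying $\ell$. Your periodicity observation ${\cal Col}_X(T_N,R_k)={\cal Col}_X(T_N,R_{k+\ell})$ is correct and is a nice way to exhibit regions that colorings cannot separate, but to complete a proof along your lines you would still need a quandle with a cohomologically nontrivial 2-cocycle and an explicit verification that the $\ell$ extra weights do not cancel; as written, both are missing.
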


\begin{proof}
Set $X=\Z_2[t]/(t^2+t+1)$,  a four-element Alexander quandle denoted by $S_4$ in \cite{CJKLS}, also called 
the tetrahedral quandle, that corresponds to rotations of a regular tetrahedron.
We represent its elements by $\{ 0, 1, t, 1+t \} $.
We use the 2-cocycle of $X=\Z_2[t]/(t^2+t+1)$ with coefficient group $A=\Z_2$ given in \cite{CJKS},
$\phi=\prod \chi_{(a,b)}$, where the product ranges over all $a, b \in \{0, 1, 1+t \}$ such that $a \neq b$, and $\chi$ denotes Kronecker's delta.

In Figure~\ref{two}, the knotoid product  $T_{n,m}:=T_n \cdot T_m$ is depicted.
The arcs and regions are labeled as in Example~\ref{ex:Tn} for the first factor $T_n$ and primed symbols 
for the second factor $T_m$ as depicted. 
We use the computations of colorings by Alexander quandles as in Example~\ref{ex:Tn}.
Consider the colorings by $X$ specified by ${\cal C}(p_\iota)=x=0$. 
Then one computes 
$${\cal C}(A_1)=tx+(1-t)y=(1+t)y \in X, \ 
{\cal C}(A_2)=0, \ {\cal C}(A_3)=y, $$
 and this pattern repeats in period $3$.
We also compute 
$${\cal C}(R_0) = {\cal C}(R_1)=tx+(1-t)y=(1+t)y, \  
{\cal C}(R_2)=ty, $$
and starting 
${\cal C}(R_3)=x=0$, 
${\cal C}(R_4)=(1+t)y$ and 
${\cal C}(R_5)=ty$,
it repeats this pattern in period $3$.
The colorings on the part $T_m$ is similar.

There is a coloring of $T_n$ by $X$ if and only if 
$${\cal C}(A_{n-1})=y, \ {\rm and} \ {\cal C}(A_{n})={\cal C}(R_0). $$
We choose $n=4+6k$ (since the pattern repeats by 3 and $n$ must be even) for nonnegative integers $k$,  then we have 
$${\cal C}(A_{n-1})={\cal C}(A_{3+6k})=y\  {\rm  and} \ {\cal C}(A_{n})={\cal C}(A_{4+6k})=
(1+t)y={\cal C}(R_0), $$
 so that $T_n$ has a nontrivial coloring for all $y\neq 0$. 
If $y=0$, it provides a trivial coloring by $0$ on all arcs.

For the colors $x=0$ and  ${\cal C}(A_{n})={\cal C}(A_{4+6k})=
(1+t)y={\cal C}(R_0)$, we have 
$${\cal C}(A_{1}')= {\cal C}(A_{n})=x'= (1+t)y={\cal C}(R_0). $$
Again from the computations in Example~\ref{ex:Tn}, in this case, 
we obtain 
$${\cal C}(A_1')=t x'+(1+t)y'=t (1+t)y + (1+t)y'=y+(1+t)y' , \ 
{\cal C}(A_2')=x'=(1+t)y, \  {\cal C}(A_3')=y', $$
and this repeats in period 3.
We also obtain
$${\cal C}(R_1')=tx'+(1+t)y'=y+(1+t)y', \  
{\cal C}(R_2')=(1+t)x' + ty', \ 
{\cal C}(R_3')=x', $$
 and this repeats in period 3.
To obtain a coloring for $m=2$, we need 
$${\cal C}(A_2')={\cal C}(R_0'), \ {\rm and } \ {\cal C}(A_1')=y' . $$
From the above computation we have only one solution, $y'=(1+t)y$, giving a trivial coloring for the $T_m$ part. In this case 
$${\cal C}(A_2')={\cal C}(p_\tau)=(1+t)y. $$

We choose a quandle triple $({\cal C}(p_\infty), {\cal C}(p_\iota), {\cal C}(p_\tau) )=(a,b,c)=(1+t,0,1+t)$.
Then in the above computation we need $x=0$, ${\cal C}(p_\tau)={\cal C}(R_0')=(1+t)y=1+t$ so that $y=1$
to have a coloring.
Let $\phi$ be the 2-cocycle mentioned above, and let $\zeta$ be a multiplicative generator of the 
coefficient group $\Z_2$.

Let $p_\infty$ be in $R_4$. Then there is a unique coloring, 
and the cocycle value for $\Phi_\phi^{(a,b,c)} (K, R, \gamma_\iota ) $ is 
$$\phi( {\cal C}(R_4), {\cal C}(A_3))^{-1}=\phi(1+t, 1)^{-1}=\zeta^{-1}=\zeta . $$
The negative exponent of $\phi$ is due to the sign of the crossing from $R_4$ to $R$.
For other paths, we compute 
\begin{eqnarray*}
\Phi_\phi^{(a,b,c)} (K, R, \gamma_\tau ) &=& \phi( {\cal C}(R_4), {\cal C}(A_4))=\phi(1+t, 1+t)=1, \\ 
\Phi_\phi^{(a,b,c)} (K, R, \gamma_{\iota\tau} ) &=& \phi( {\cal C}(A_3), {\cal C}(R_4))^{-1} \phi ( {\cal C}(R_4), {\cal C}(A_4))
=\phi(1+t, 1)^{-1} \phi( (1+t, 1+t)=\zeta, 
\end{eqnarray*}
so that we obtain $\Phi_\phi^{(a,b,c)}(K, R_4)=(\zeta, 1, \zeta)$.

Let $p_\infty$ be in $R_1'$. Then there is a unique coloring, which is the same as above.
Since the coloring is trivial on the $T_m$ part, the cocycle value is trivial for all paths.
Hence we have $\Phi_\phi^{(a,b,c)}(K, R_4)=(1, 1, 1)$.
Since there are infinitely many choices for $n=4+6k$, the claim follows.

For both regions we verified that the number of colorings is one, therefore it does not detect planar nonequivalence.
It remains to show that the family consists of distinct spherical equivalence classes.
It is sufficient to show this for $T_n$ for various $n=4+6k$.
We use the colorings with colors of end points specified, without region colors, to
distinguish spherical equivalence.

By computations in Example~\ref{ex:Tn}, $T_n$ is colored nontrivially by the dihedral quandle ${\bf R}_\ell$ 
with colors
${\cal C}(A_0)=x$ and ${\cal C}(A_{n-1})=y$ if and only if $(n-1)(x-y)=0$. 
Thus there is a nontrivial coloring if and only if $n-1=0 \in \Z_\ell$.
This is satisfied if $n=\ell +1$ and is not satisfied if $n < \ell +1$. 
%In this case ${\cal C}(A_n)=- n x + (n+1)y$. 
 We consider  specified colors ${\cal C}(p_\iota)=0$ and ${\cal C}(p_\tau)=\ell +1$.
 For $n=4$, we choose $\ell=3$, then $T_n$ has non-trivial coloring and $T_n$ does not for all $n>3$.
 Inductively by choosing $\ell=5+6k$ for $k \in \N$, we obtain  a spherically nonequivalent family.
\end{proof}

\section{Concluding remarks}

A natural question arises as to the interpretation of end specified quandle invariants in terms of quandle homology theory. Since the boundary colors are related to the boundary operator of second chain groups, such an interpretation is expected. Applications to quandle homology from this point of view would be an interesting topic of further research.

We also note that the invariants discussed in this paper extend naturally to \textit{linkoids} \cite{gabrovvsek23}. Linkoids are the natural generalization of knotoids in which one allows for several knotted components, either closed or with endpoints. With the exception of the triangular quandle cocylcle invariants, whose construction relies on there being exactly two endpoints, the end specified quandle invariants can be defined analogously for linkoids. Moreover the triangular quandle cocycle invariants extend to `multi-knotoids', which are linkoids with only a single open-ended component.

Consequently the scheme for deriving $\mathbb{R}$-valued entanglement measures of open curves from the terms of ${\cal Col}_X (K, R_\infty)$, as discussed in Section \ref{sec:Intro}, can be applied equally well to systems of several open curves. The resulting entanglement measures can then be used to detect the planar characteristics of the linkoids obtained from projections of these open curves. This is a feature that is not shared by the other entanglement measures studied so far, $\mathbb{R}$-valued or otherwise. Hence another direction for future research is the implementation of these entanglement measures for systems of open curves, to investigate the influence of this planar characteristic on the topology of physical examples of tangled open curve systems.
\\

\noindent
{\bf Acknowledgement} 
We would like to thank Nata\v{s}a Jonoska for organizing  Workshop: discussions of DNA knots, held at University of South Florida in 
January 22-24, 2023,   where this work was initiated. 
Thanks also go to the participants for valuable discussions.  Mohamed Elhamdadi was partially supported by Simons Foundation collaboration grant 712462.

\end{document}